\newtheorem{theorem}{Theorem}[section]
\newtheorem{lemma}[theorem]{Lemma}
\newtheorem{defi}[theorem]{Definition}
\newtheorem{rem}[theorem]{Remark}
\newtheorem{prop}[theorem]{Proposition}
\newtheorem{cor}[theorem]{Corollary}
\DeclareMathOperator{\im}{im}
\DeclareMathOperator{\ind}{ind}
\DeclareMathOperator{\coker}{coker}
\DeclareMathOperator{\re}{Re}
\DeclareMathOperator{\spann}{span}
\newcommand{\N}{{\mathbb N}}
\newcommand{\R}{{\mathbb R}}
\newcommand{\Z}{{\mathbb Z}}
\title{The Index Bundle and Multiparameter Bifurcation for Discrete Dynamical Systems}
\author{Robert Skiba and Nils Waterstraat}
\begin{document}
\date{}
\maketitle

\footnotetext[1]{{\bf 2010 Mathematics Subject Classification: Primary 58E07; Secondary 37C29, 19L20, 47A53}}

\begin{abstract}
We develop a $K$-theoretic approach to multiparameter bifurcation theory of homoclinic solutions of discrete non-autonomous dynamical systems from a branch of stationary solutions. As a byproduct we obtain a family index theorem for asymptotically hyperbolic linear dynamical systems which is of independent interest. In the special case of a single parameter, our bifurcation theorem weakens the assumptions in previous work by Pejsachowicz and the first author.
\end{abstract}

\section{Introduction}
The aim of this paper is to investigate the existence of non-trivial homoclinic trajectories
of discrete non-autonomous dynamical systems by topological bifurcation theory. For many years, bifurcation theory for various types of bounded solutions of one-parameter families of discrete non-autonomous
dynamical systems has been studied by many authors using exponential dichotomies
for linear operator equations, the Lyapunov-Schmidt method and the
Melnikov integral. We refer the reader to \cite{Huls,Pa84,Pal88,Christian10,Poetzsche, Poetzscheb}, for example,  and
the extensive bibliography that can be found therein. Recently, following \cite{JacoboAMS}, the first author proposed jointly with Pejsachowicz in \cite{JacoboRobertI} and \cite{JacoboRobertII} a new approach to this subject by topological methods using the index bundle for closed paths of Fredholm operators and the $KO$-theory of the unit circle. This paper provides a significant extension of the results from \cite{JacoboRobertII} from systems depending on a single parameter to the multiparameter case, where we also weaken the assumptions in the previous work. \newline\indent
To summarise the setting briefly, we fix some $N\in\mathbb{N}$ and denote by $c_0(\mathbb{R}^N)$ the Banach space of all sequences in $\mathbb{R}^N$ converging to $0$ as $n\rightarrow\pm\infty$ with respect to the sup norm. Let $\Lambda$ be a compact CW-complex and $f=\{f_n\colon\Lambda\times\R^N\rightarrow\R^N\}_{n\in\mathbb{Z}}$ a sequence of continuous maps such that each $f_n$ is differentiable with respect to the $\mathbb{R}^N$ variable and the derivative $(Df_n)(\lambda,u)$ depends continuously on $(\lambda,u)\in\Lambda\times\R^N$. We assume that $f_n(\lambda,0)=0$ for all $n\in\mathbb{Z}$ and we consider the family of discrete dynamical systems
\begin{align}\label{nonlin}
x_{n+1}=f_n(\lambda,x_n),\quad n\in\mathbb{Z},
\end{align}
parametrised by $\Lambda$. We refer to solutions of \eqref{nonlin} which belong to $c_0(\mathbb{R}^N)$ as \textit{homoclinic solutions}. Note that by assumption $0=\{0\}_{n\in\mathbb{N}}\in c_0(\R^N)$ satisfies \eqref{nonlin} for all $\lambda\in\Lambda$ and in this article we study \textit{bifurcation of homoclinic solutions of \eqref{nonlin}} from this given branch of solutions.\newline\noindent
A central role is played by the discrete dynamical systems
\begin{align}\label{lin}
x_{n+1}=a_n(\lambda)x_n,\quad n\in\mathbb{Z},
\end{align}
where $a_n(\lambda)=(Df_n)(\lambda,0)$ are the derivatives of $f_n(\lambda,\cdot):\mathbb{R}^N\rightarrow\mathbb{R}^N$ at $0\in\mathbb{R}^N$. Under suitable assumptions on the sequence of maps $f=\{f_n\}_{n\in\mathbb{Z}}$ in \eqref{nonlin} and $a=\{a_n\}_{n\in\mathbb{Z}}$ in \eqref{lin}, there is an open ball $B\subset c_0(\R^N)$ about $0$, as well as maps $F\colon\Lambda\times B\rightarrow c_0(\R^N)$ and $L\colon\Lambda\times c_0(\R^N)\rightarrow c_0(\R^N)$ such that $F(\lambda,x)=0$ if and only if $x\in B$ is a homoclinic solution of \eqref{nonlin}, and $L_\lambda x=0$ if and only if $x$ is a homoclinic solution of \eqref{lin}. Moreover, each $F_\lambda$ is a $C^1$-Fredholm map and $D_0F_\lambda$, the derivative of $F_\lambda:B\rightarrow c_0(\R^N)$ at $0\in B$, is given by $L_\lambda$. In this way, we study the bifurcation problem for \eqref{nonlin} by applying topological methods to the family of nonlinear maps $F$ and its linearisation $L$.\newline\noindent
Let us now briefly sketch our methods and our main results. We will see below that the operators $L_\lambda$ are Fredholm for all $\lambda\in\Lambda$. Using a construction due to Atiyah and Jänich from the sixties, we obtain an element $\ind(L)$ in the $KO$-theory group of the parameter space $\Lambda$ which is a natural generalisation of the classical integral Fredholm index of a single operator. We will recall below the definition of $KO(\Lambda)$, which is a group made by equivalence classes of vector bundles over $\Lambda$. This index bundle was used by Atiyah and Singer in the extension of their famous index theorem to families in \cite{AtiyahSinger}, and among many other applications, its relevance for multiparameter bifurcation theory was discovered by Pejsachowicz in \cite{JacoboK} and since then used in various generality in, e.g., \cite{BartschII}, \cite{FiPejsachowiczII}, \cite{JacoboTMNA0}, \cite{JacoboTMNAI}, \cite{JacoboTMNAII}, \cite{JacoboJFPTA} and \cite{NilsBif}. Here, roughly speaking, after a rather technical finite dimensional reduction of the bifurcation problem, it turns out that the non-existence of a bifurcation point for the nonlinear map $F$ would imply that $J(\ind(L))$ vanishes, where $J:KO(\Lambda)\rightarrow J(\Lambda)$ denotes the $J$-homomorphism, that was introduced by Atiyah in \cite{AtiyahThom}. The $J$-homomorphism is notoriously hard to compute, however, its non-triviality can be obtained from characteristic classes of vector bundles what makes it substantially easier to work with it in our situation. Here we obtain a bifurcation theorem from this idea by computing explicitly the index bundle $\ind(L)$ in $KO(\Lambda)$ in terms of vector bundles obtained from eigenvectors of the matrices $a_n(\lambda)$ in \eqref{lin}. This family index theorem for the linear discrete dynamical systems \eqref{lin} is of independent interest and it is the first main result of this paper that we prove below. In combination with the above outlined method to study bifurcation of solutions for the family of operators $F$, we then obtain our second main theorem which gives a topological bifurcation invariant for the nonlinear discrete dynamical systems \eqref{nonlin}. Let us finally point out that if $\Lambda=S^1$, i.e. our families are paths, then our invariant becomes $\mathbb{Z}_2$-valued and coincides with the previously introduced one by the first author and Pejsachowicz in \cite{JacoboRobertII}. However, as we have already mentioned above, the assumptions in our bifurcation theorem are weaker than in the previous approach.\\
The paper is organised as follows. In the next section we introduce our assumptions and state our main theorems as well as some corollaries.
In Section $3$ we first recall the concept of the Atiyah-Jänich index bundle and its main properties. Afterwards, we prove our family index theorem \ref{thm-lin} for asymptotically hyperbolic linear discrete dynamical systems. In the fourth section, we prove our main Theorem \ref{thm:nonlin} on bifurcation of homoclinic solutions for the nonlinear systems \eqref{nonlin}.
Section $5$ contains the detailed proof of Theorem \ref{thm:nonlinII}, which derives from Theorem \ref{thm:nonlin} an estimate of the covering dimension of all bifurcation points in $\Lambda$. The following sixth section provides a nontrivial example illustrating our results about the existence of bifurcation points. Afterwards, in Section $7$ we discuss an example of a discrete dynamical system which explains the role of the assumptions in our theorems. The final Section $8$ is devoted to some comments about possible extensions of our results.\\
Let us finally fix some common notations that we use throughout the paper. We denote by $\mathcal{L}(X)$ the space of bounded linear operators on a Banach space $X$ with the operator norm, and by $GL(X)$ the open subset of all invertible elements. The symbol $I_X$ stands for the identity operator on $X$. If $X=\mathbb{K}^N$ for $\mathbb{K}=\mathbb{C}$ or $\mathbb{K}=\mathbb{R}$ and for some $N\in\mathbb{N}$, then we use instead the common notation $M(N,\mathbb{K})$, $GL(N,\mathbb{K})$ and $I_N$, respectively. In what follows, we are going to work with vector bundles $E$ over topological spaces $\Lambda$, and we shall denote the product bundle $E=\Lambda\times V$ for a linear space $V$ by $\Theta(V)$.


\section{The Main Theorem and some Corollaries}\label{section-mainthm}
As in the introduction, we let $\Lambda$ be a connected compact CW complex, and we now fix a metric $d$ on $\Lambda$. We consider a sequence of continuous maps $f_n\colon\Lambda\times\mathbb{R}^N\rightarrow\mathbb{R}^N$, $n\in\mathbb{Z}$, which are differentiable with respect to the $\mathbb{R}^N$ variable and satisfy the following assumptions:
\begin{itemize}
	\item[(A1)] $f_n(\lambda,0)=0$ for all $n\in\mathbb{Z}$ and $\lambda\in\Lambda$,
	\item[(A2)] for every compact set $K\subset \R^N$ and for every $\varepsilon>0$ there exists $\delta>0$ such that
	\[\sup_{n\in\mathbb{Z}}\left\|f_n(\lambda,x)-f_n(\mu,y)\right\|+\sup_{n\in\mathbb{Z}}\left\| (Df_n)(\lambda,x)-(Df_n)(\mu,y)\right\|<\varepsilon\]
	for all $(\lambda,x),(\mu,y)\in\Lambda\times K$ such that $d(\lambda,\mu)+\|x-y\|<\delta$.
\end{itemize}
Note that $0\in c_0(\mathbb{R}^N)$ is a solution of all equations \eqref{nonlin} by (A1).

\begin{defi}\label{def:bif}
A \textit{bifurcation point} for the family of nonlinear difference equations \eqref{nonlin} is a parameter value $\lambda^\ast\in\Lambda$ such that in every neighbourhood of $(\lambda^\ast,0)$ in $\Lambda\times c_0(\mathbb{R}^N)$ there is $(\lambda,x)$ such that $x\neq 0$ satisfies \eqref{nonlin}.
\end{defi}
\noindent

In what follows, we set
\begin{equation}\label{eq-a-n}
a_n(\lambda):=(Df_n)(\lambda,0),\quad n\in\mathbb{Z},\,\lambda\in\Lambda,
\end{equation}
and we note that this is a sequence of continuous families of matrices $a_n\colon \Lambda\rightarrow M(N,\mathbb{R})$. Let us recall that an invertible matrix is called \textit{hyperbolic} if it has no eigenvalue of modulus $1$. Further to the assumptions $($A1$)$--$($A2$)$ above, we also require
\begin{itemize}
	\item[(A3)] the sequence $a_n(\lambda)$ converges uniformly in $\lambda$ to families $a(\lambda,\pm\infty)$ of hyperbolic matrices,
	\item[(A4)] the matrices $a(\lambda,+\infty)$ and $a(\lambda,-\infty)$ have the same number of eigenvalues (counting with multiplicities) inside the unit disc for some, and hence for any, $\lambda\in\Lambda$,
	\item[(A5)] there is some $\lambda_0\in\Lambda$ such that the linear difference equation
	\begin{align}\label{equlin}
	x_{n+1}=a_n(\lambda_0)x_n,\quad n\in\mathbb{N},
	\end{align}
	has only the trivial solution $0\in c_0(\mathbb{R}^N)$.
\end{itemize}
Let us recall that a continuous map $a\colon \Z\times\Lambda\rightarrow M(N,\mathbb{R})$ which satisfies Assumption (A3) is called \textit{asymptotically hyperbolic}.\\


As a hyperbolic matrix $a\in GL(N,\mathbb{R})$ has no eigenvalues on the unit circle, the spectral projection
\begin{align}\label{comP}
P=\frac{1}{2\pi i}\int_{S^1}{(zI_N-a)^{-1}dz}\in M(N,\mathbb{C})
\end{align}
is defined. We denote by $\re\colon \mathbb{C}^N\rightarrow\mathbb{R}^N$ the real part of elements in $\mathbb{C}^N$ and set
\begin{align}\label{reP}
P^su=\re(Pu),\quad P^uu=u-P^su,\quad u\in\mathbb{R}^N,
\end{align}
which are projections in $\mathbb{R}^N$ as the matrix $a$ is real. The image of $P^s$ consists of the real parts of all generalised eigenvectors with respect to eigenvalues inside $S^1$ and it is called the \textit{stable subspace} $E^s(a)$ of $a$. Analogously, the image of $P^u$ consists of the real parts of all generalised eigenvectors having eigenvalues outside $S^1$ and it is called the unstable subspace $E^u(a)$ of $a$. Note that

\begin{align}\label{sum}
E^s(a)\oplus E^u(a)=\mathbb{R}^N
\end{align}
and, moreover, it is not very difficult to see that

\begin{align*}
E^s(a)&=\{x_0\in\mathbb{R}^N\mid \, a^nx_0\rightarrow 0,\, n\rightarrow\infty\},\\
E^u(a)&=\{x_0\in\mathbb{R}^N\mid \, a^{n}x_0\rightarrow 0,\, n\rightarrow-\infty\}.
\end{align*}
Let us now consider the families of hyperbolic matrices $a(\lambda,\pm\infty)$, $\lambda\in\Lambda$, that we introduced in (A4). As none of these matrices has an eigenvalue on $S^1$, \eqref{comP} defines two continuous families of projections on $\mathbb{C}^N$. By taking real parts and building complementary projections as in \eqref{reP}, we obtain four families of projections on $\mathbb{R}^N$ parametrised by the space $\Lambda$. In what follows we denote the images of these projections by $E^s(\lambda,\pm\infty)$ and $E^u(\lambda,\pm\infty)$. Finally, as the images of continuous families of projections are vector bundles over the parameter space (cf. e.g. \cite{Park}), we get four vector bundles over $\Lambda$
\begin{align*}
E^s(\pm\infty)&=\{(\lambda,u)\in\Lambda\times\mathbb{R}^N \mid\, u\in E^s(\lambda,\pm\infty)\},\\
E^u(\pm\infty)&=\{(\lambda,u)\in\Lambda\times\mathbb{R}^N\mid \, u\in E^u(\lambda,\pm\infty)\},
\end{align*}
which are all subbundles of the product bundle $\Theta(\mathbb{R}^N)=\Lambda\times\mathbb{R}^N$, and which satisfy
\begin{align}\label{bundlesum}
E^s(+\infty)\oplus E^u(+\infty)=E^s(-\infty)\oplus E^u(-\infty)=\Theta(\mathbb{R}^N)
\end{align}
by \eqref{sum}. Let us recall that two vector bundles $E$, $F$ over $\Lambda$ are called \textit{stably isomorphic} if $E\oplus\Theta(\mathbb{R}^n)$ is isomorphic to $F\oplus\Theta(\mathbb{R}^n)$ for some non-negative integer $n$. The $KO$-theory of $\Lambda$ is the abelian group $KO(\Lambda)$ consisting of all formal differences $[E]-[F]$ of isomorphism classes $[E]$, $[F]$ of vector bundles $E$ and $F$ over $\Lambda$, with the equivalence relation $[E]-[F]\simeq[E']-[F']$ if $E\oplus F'$ is stably isomorphic to $E'\oplus F$. The group product on $KO(\Lambda)$ is induced by the direct sum of vector bundles, i.e.
\begin{equation*}
([E]-[F])+([E']-[F'])=[E\oplus E']-[F\oplus F'].
\end{equation*}
Every continuous map $g\colon\Lambda\rightarrow\Lambda'$ between compact topological spaces induces a group homomorphism $g^\ast\colon KO(\Lambda')\rightarrow KO(\Lambda)$ by the pullback construction for vector bundles, which makes $KO$-theory a contravariant functor from the category of topological spaces to the category of abelian groups. For $\lambda_0\in\Lambda$, there is a canonical inclusion map $\iota\colon \{\lambda_0\}\hookrightarrow\Lambda$. The reduced $KO$-theory group of $\Lambda$, denoted by $\widetilde{KO}(\Lambda)$, is defined as the kernel of the induced homomorphism $\iota^\ast\colon KO(\Lambda)\rightarrow KO(\{\lambda_0\})$. As $\Lambda$ is connected, this definition does not depend on the choice of $\lambda_0$, and moreover it is readily seen that $\widetilde{KO}(\Lambda)$ consists of all $[E]-[F]\in KO(\Lambda)$ such that $E$ and $F$ are of the same dimension.\\
The vector bundles $E$ and $F$ over $\Lambda$ are called \textit{fibrewise homotopy equivalent} if there is a fibre preserving homotopy equivalence between their sphere bundles $S(E)$ and $S(F)$. Moreover, $E$ and $F$ are \textit{stably fibrewise homotopy equivalent} if $E\oplus\Theta(\mathbb{R}^m)$ and $F\oplus\Theta(\mathbb{R}^n)$ are fibrewise homotopy equivalent for some non-negative integers $m,n$. The quotient of $\widetilde{KO}(\Lambda)$ by the subgroup generated by all $[E]-[F]$ where $E$ and $F$ are stably fibrewise homotopy equivalent is denoted by $J(\Lambda)$ and the quotient map $J\colon\widetilde{KO}(\Lambda)\rightarrow J(\Lambda)$ is called the \textit{generalised J-homomorphism} (cf. \cite{KTheoryAtiyah}).\\
With all this said, we can now state the main theorem of this paper.

\begin{theorem}\label{thm:nonlin}
If the system \eqref{nonlin} satisfies the assumptions \emph{(A1)--(A5)} and
\[J(E^s(+\infty))\neq J(E^s(-\infty))\in J(\Lambda),\]
then there is a bifurcation point.
\end{theorem}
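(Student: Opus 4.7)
The plan is to reformulate the bifurcation problem as a zero problem for a parametrised family of nonlinear Fredholm maps, compute the $KO$-theoretic index bundle of the linearisation with the help of the family index theorem \ref{thm-lin}, and then argue by contradiction through the $J$-homomorphism.

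First I would make precise the setup sketched in the introduction: on a small ball $B\subset c_0(\mathbb{R}^N)$ around $0$ define $F\colon\Lambda\times B\rightarrow c_0(\mathbb{R}^N)$ by $F(\lambda,x)_n=x_{n+1}-f_n(\lambda,x_n)$, whose zeros on $\{\lambda\}\times B$ are exactly the small homoclinic solutions of \eqref{nonlin}. The uniform control (A2) together with (A1) shows that $F$ is continuous and each $F_\lambda$ is $C^1$ with $D_0F_\lambda=L_\lambda$, where $(L_\lambda x)_n=x_{n+1}-a_n(\lambda)x_n$. Asymptotic hyperbolicity (A3) is the standard input implying that every $L_\lambda$ is Fredholm on $c_0(\mathbb{R}^N)$, assumption (A4) forces its numerical index to vanish, and (A5) gives that $L_{\lambda_0}$ is invertible for some $\lambda_0\in\Lambda$. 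The resulting norm continuous family $L=\{L_\lambda\}_{\lambda\in\Lambda}$ of Fredholm operators thus has a well-defined Atiyah--J\"anich index bundle $\ind(L)\in KO(\Lambda)$, and the invertibility at $\lambda_0$ places it in $\widetilde{KO}(\Lambda)$.

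Next, I would invoke the family index theorem \ref{thm-lin}, which identifies $\ind(L)$ as a formal difference of the stable bundles $E^s(\pm\infty)$. Applying the generalised $J$-homomorphism and using its additivity, the hypothesis of the theorem translates into $J(\ind(L))\neq 0$ in $J(\Lambda)$. The proof then proceeds by contradiction: assume that no $\lambda\in\Lambda$ is a bifurcation point, so that $F^{-1}(0)\cap(\Lambda\times V)=\Lambda\times\{0\}$ for some open neighbourhood $V$ of $0$ in $c_0(\mathbb{R}^N)$.

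The main technical step, and the main obstacle, is a parametrised Lyapunov--Schmidt reduction performed uniformly over the whole parameter space. Using a norm continuous family of finite rank projections $Q\colon c_0(\mathbb{R}^N)\rightarrow c_0(\mathbb{R}^N)$ whose range contains $\coker L_\lambda$ for every $\lambda\in\Lambda$ (such a family exists by compactness of $\Lambda$ and the standard construction of the index bundle), one reduces the local zero set of $F$ near $\Lambda\times\{0\}$ to the local zero set of a continuous family of maps $g_\lambda\colon V_\lambda\rightarrow W_\lambda$ between fibres of two finite dimensional vector bundles $V,W\rightarrow\Lambda$ representing $\ind(L)=[V]-[W]$, satisfying $g_\lambda(0)=0$ and $D_0g_\lambda=0$. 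The assumed absence of bifurcation forces $g_\lambda^{-1}(0)$ to be locally $\{0\}$ near the zero section, and the standard Pejsachowicz-type argument (cf.\ \cite{JacoboK,NilsBif}) upgrades this data into a stable fibrewise homotopy equivalence between the sphere bundles $S(V)$ and $S(W)$, which is precisely $J(\ind(L))=0$ and contradicts the previous paragraph. The delicate point is to carry out the reduction uniformly in $\lambda\in\Lambda$ so that it produces honest vector bundles $V$ and $W$ rather than merely a virtual difference; here the CW structure and compactness of $\Lambda$ enter in an essential way.
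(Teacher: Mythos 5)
Your overall strategy---reformulate the problem via the map $F$, reduce to a finite-dimensional family using the index bundle construction, compute $\ind(L)$ by Theorem \ref{thm-lin}, and derive a contradiction through the $J$-homomorphism---is the same as the paper's. However, there is a genuine gap at the decisive step. You assert that the absence of bifurcation, i.e.\ that the reduced maps $g_\lambda$ have no zeros off the zero section near it, can be ``upgraded by the standard Pejsachowicz-type argument'' to a stable fibrewise homotopy equivalence of sphere bundles and hence to $J(\ind(L))=0$. As stated, this implication is false: non-bifurcation alone does not yield it, and the paper's Section $7$ exhibits a family satisfying (A1)--(A4) (but not (A5)) with $w(E^s(+\infty))\neq w(E^s(-\infty))$ and no bifurcation points at all. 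The indispensable extra input is precisely (A5), which you record at the outset but never use in the contradiction argument. In the paper it enters as follows: since $L_{\lambda_0}$ is invertible, the inverse function theorem makes $F_{\lambda_0}$ a local diffeomorphism at $0$, so the normalised bifurcation map $\Gamma\colon S(E,r)\rightarrow\Lambda\times S^{n-1}$, $\Gamma(v)=(\pi(v),\|F^1(\psi(v))\|^{-1}F^1(\psi(v)))$, restricts to a homotopy equivalence on the single fibre over $\lambda_0$; Dold's theorem on fibrewise homotopy equivalences over a connected compact CW complex then promotes this to a fibrewise homotopy equivalence over all of $\Lambda$, and only at that point does $J(\ind(L))=0$ follow. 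Without this mechanism---a one-fibre homotopy equivalence furnished by (A5), globalised by Dold's theorem---your argument does not close.

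Two smaller inaccuracies: it is (A4), i.e.\ vanishing of the numerical index, that places $\ind(L)$ in $\widetilde{KO}(\Lambda)$, not the invertibility at $\lambda_0$; and since the kernels of the $L_\lambda$ do not form a vector bundle, the reduction must be carried out over the bundle $E(L,V)\supset\ker L_\lambda$ rather than the kernel itself, so the reduced map does not in general satisfy $D_0g_\lambda=0$---this is harmless, but the local structure of the zero set near the zero section has to be obtained as in Lemma \ref{lemma-reduction} (a parametrised fixed-point argument producing the fibre-preserving homeomorphism $\phi$) rather than by a classical Lyapunov--Schmidt normalisation on the kernel.
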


\begin{rem}
We will see in Remark \ref{rem:dim} below that $[E^s(+\infty)]-[E^s(-\infty)]\in\widetilde{KO}(\Lambda)$, so that the $J$-homomorphism can indeed be applied to this class.

\end{rem}

\begin{rem}
It follows from \eqref{bundlesum} that

\begin{align*}
[E^s(+\infty)]-[E^s(-\infty)]=[E^u(+\infty)]-[E^u(-\infty)]\in\widetilde{KO}(\Lambda)
\end{align*}
and so we could actually replace the stable bundles by the unstable ones in Theorem \ref{thm:nonlin}.
\end{rem}

Let us point out that very little is known about $J(\Lambda)$ as these groups are notoriously hard to compute, and so one might guess that Theorem \ref{thm:nonlin} is of limited use. However, in order to check that $J(E^s(+\infty))\neq J(E^s(-\infty))$ we not even need to know $J(\Lambda)$ explicitly. Indeed, the i-th Stiefel Whitney class $w_i(E)\in H^i(\Lambda;\mathbb{Z}_2)$, $i\in\mathbb{N}$, for vector bundles $E$ over $\Lambda$ descends to a map on $\widetilde{KO}(\Lambda)$. Moreover, $w_i(E)$ only depends on the stable fibrewise homotopy classes of the associated sphere bundle $S(E)$ and so $w_i$ factorises through $J(\Lambda)$. Consequently, if $w_i(E)\neq w_i(F)$, then $J(E)\neq J(F)$ for any bundles $E$, $F$ over $\Lambda$. Denoting by
\[w(E)=1+w_1(E)+w_2(E)+\ldots\in H^\ast(\Lambda;\mathbb{Z}_2)\]
the total Stiefel-Whitney class of $E$, we obtain the following corollary of Theorem \ref{thm:nonlin}.
\begin{cor}
If the system \eqref{nonlin} satisfies the assumptions \emph{(A1)--(A5)} and
\[w(E^s(+\infty))\neq w(E^s(-\infty))\in H^\ast(\Lambda;\mathbb{Z}_2),\]
then there is a bifurcation point.
\end{cor}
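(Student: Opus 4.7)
The strategy is to deduce the corollary directly from Theorem \ref{thm:nonlin} by showing that the hypothesis on total Stiefel--Whitney classes forces the non-equality $J(E^s(+\infty))\neq J(E^s(-\infty))$ in $J(\Lambda)$. Since the conclusion (existence of a bifurcation point) coincides with that of Theorem \ref{thm:nonlin}, no additional analysis of the dynamical system \eqref{nonlin} is required; the entire task is homotopy-theoretic.

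First I would observe that, by the Whitney sum formula $w(E\oplus\Theta(\R^n))=w(E)\cdot w(\Theta(\R^n))=w(E)$, each Stiefel--Whitney class $w_i$ is a stable invariant of real vector bundles. Consequently $w_i$ descends to a well-defined map $\widetilde{KO}(\Lambda)\to H^i(\Lambda;\Z_2)$ sending $[E]-[F]$ to $w_i(E)-w_i(F)$; this applies in particular to $[E^s(+\infty)]-[E^s(-\infty)]$, which lies in $\widetilde{KO}(\Lambda)$ by Remark \ref{rem:dim}. Next, I would appeal to the classical fact (due to Thom, and recorded in Atiyah's paper \cite{KTheoryAtiyah}) that the Stiefel--Whitney classes are invariants of the stable fibrewise homotopy type of the associated sphere bundle; equivalently, the composite
\[
\widetilde{KO}(\Lambda)\xrightarrow{\,w_i\,}H^i(\Lambda;\Z_2)
\]
vanishes on the subgroup generated by differences $[E]-[F]$ of stably fibrewise homotopy equivalent bundles. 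Hence $w_i$ factors uniquely through the quotient $J\colon\widetilde{KO}(\Lambda)\to J(\Lambda)$, giving a map $\bar{w}_i\colon J(\Lambda)\to H^i(\Lambda;\Z_2)$ with $\bar{w}_i\circ J=w_i$.

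Putting everything together: by assumption $w(E^s(+\infty))\neq w(E^s(-\infty))$, so there exists some index $i\geq 1$ such that $w_i(E^s(+\infty))\neq w_i(E^s(-\infty))$ in $H^i(\Lambda;\Z_2)$. Applying the factorisation $\bar{w}_i$ to $J(E^s(+\infty))$ and $J(E^s(-\infty))$, we would obtain the respective classes $w_i(E^s(+\infty))$ and $w_i(E^s(-\infty))$, which differ. This forces $J(E^s(+\infty))\neq J(E^s(-\infty))$ in $J(\Lambda)$, and Theorem \ref{thm:nonlin} then yields a bifurcation point.

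I do not anticipate any serious obstacle: the only non-trivial input is the stable-fibrewise-homotopy invariance of $w_i$, which is standard and is the very reason the $J$-homomorphism is a useful target for $\widetilde{KO}$-valued invariants in bifurcation theory. The remainder is formal manipulation of equivalence classes and a citation to Theorem \ref{thm:nonlin}.
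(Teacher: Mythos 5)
Your proposal is correct and follows essentially the same route as the paper: the paragraph preceding the corollary establishes exactly this argument, namely that each $w_i$ is a stable invariant which depends only on the stable fibrewise homotopy type of the sphere bundle, hence factors through $J(\Lambda)$, so $w(E^s(+\infty))\neq w(E^s(-\infty))$ forces $J(E^s(+\infty))\neq J(E^s(-\infty))$ and Theorem \ref{thm:nonlin} applies. The only difference is that you spell out the Whitney-sum and factorisation details slightly more explicitly than the paper does.
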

Finally, we want to note the special case when the parameter space is the unit circle, as this is the setting of \cite{JacoboRobertI}. Then $w_1(E^s(\pm\infty))$ is an element of $H^1(S^1;\mathbb{Z}_2)$ which is isomorphic to $\mathbb{Z}_2$. It is readily seen that in this case our $w_1(E^s(\pm\infty))$ can be identified with the $\mathbb{Z}_2$-valued invariant in \cite{JacoboRobertI} and so we obtain an alternative proof of the main theorem of \cite{JacoboRobertI}. Note, however, that our assumptions in Theorem \ref{thm:nonlin} are weaker as in \cite{JacoboRobertI}.

\begin{cor}\label{cor-Jacobo}
If $\Lambda=S^1$, the family \eqref{nonlin} satisfies the assumptions \emph{(A1)--(A5)} and
\[w_1(E^s(+\infty))\neq w_1(E^s(-\infty))\in H^1(S^1;\mathbb{Z}_2)=\mathbb{Z}_2,\]
then there is a bifurcation point.
\end{cor}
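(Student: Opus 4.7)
The plan is to derive this corollary as an immediate specialisation to $\Lambda = S^1$ of the Stiefel-Whitney corollary that precedes it. First I note that $S^1$ has singular cohomology concentrated in degrees $0$ and $1$, so $H^i(S^1; \mathbb{Z}_2) = 0$ for every $i \geq 2$. Consequently, for any real vector bundle $E$ over $S^1$ the total Stiefel-Whitney class reduces to
\[w(E) = 1 + w_1(E) \in H^0(S^1; \mathbb{Z}_2) \oplus H^1(S^1; \mathbb{Z}_2),\]
and so $w(E) \neq w(F)$ in $H^\ast(S^1; \mathbb{Z}_2)$ is equivalent to $w_1(E) \neq w_1(F)$ in $H^1(S^1; \mathbb{Z}_2) \cong \mathbb{Z}_2$.

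Applying this observation to the asymptotic stable bundles $E^s(\pm\infty)$, the hypothesis of Corollary \ref{cor-Jacobo} is identical to the hypothesis of the preceding Stiefel-Whitney corollary. Invoking that corollary produces a bifurcation point and finishes the argument. The existence of the class $[E^s(+\infty)] - [E^s(-\infty)] \in \widetilde{KO}(S^1)$ on which the $J$-homomorphism is to be evaluated is guaranteed by assumption (A4), which forces $E^s(+\infty)$ and $E^s(-\infty)$ to have equal fibre dimension, in agreement with the remark following Theorem \ref{thm:nonlin}.

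There is essentially no obstacle at this stage: all the substantial content has already been packaged into Theorem \ref{thm:nonlin}, whose proof will combine the family index theorem for asymptotically hyperbolic linear systems, the finite-dimensional reduction of the bifurcation problem, and the nontriviality criterion via the $J$-homomorphism. The present corollary is then the elementary topological observation that on the one-dimensional CW complex $S^1$ a real vector bundle is detected up to stable fibrewise homotopy equivalence by its first Stiefel-Whitney class alone.
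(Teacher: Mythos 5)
Your proposal is correct and matches the paper's route: the corollary is obtained by specialising the total Stiefel--Whitney class corollary (equivalently, the fact that $w_1$ factors through $J(\Lambda)$, so differing $w_1$ forces $J(E^s(+\infty))\neq J(E^s(-\infty))$ and Theorem \ref{thm:nonlin} applies), and your observation that on $S^1$ the condition $w(E)\neq w(F)$ reduces to $w_1(E)\neq w_1(F)$ is exactly the point. Nothing further is needed.
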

Fitzpatrick and Pejsachowicz introduced in \cite{FiPejsachowiczII} an argument to estimate the dimension of the set of all bifurcation points in several parameter bifurcation theory. Since then their method has been used plenty of times, e.g. in \cite{JacoboTMNA0}, \cite{JacoboTMNAII}, \cite{MaciejIch}, \cite{JacoboJFPTA}, and recently it has been revisited by the second author in \cite{NilsBif}. Here we use it to derive from Theorem \ref{thm:nonlin} the following result, where $B\subset\Lambda$ denotes the set of all bifurcation points of the family \eqref{nonlin}.
\begin{theorem}\label{thm:nonlinII}
If $\Lambda$ is a compact connected topological manifold of dimension $k\geq 2$, the family \eqref{nonlin} satisfies the assumptions \emph{(A1)--(A5)} and
\[w_i(E^s(+\infty))\neq w_i(E^s(-\infty))\in H^i(\Lambda;\mathbb{Z}_2)\]
for some $1\leq i\leq k-1$, then the covering dimension of $B$ is at least $k-i$ and $B$ is not contractible as a topological space.
\end{theorem}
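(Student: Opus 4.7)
The plan is to argue by contradiction using the total Stiefel-Whitney class corollary of Theorem \ref{thm:nonlin}, combined with a pullback functoriality observation and Poincar\'e-Alexander-Lefschetz duality in $\Lambda$; this is the Fitzpatrick-Pejsachowicz mechanism from \cite{FiPejsachowiczII}, revisited in \cite{NilsBif}. Set $\alpha := w_i(E^s(+\infty)) - w_i(E^s(-\infty)) \in H^i(\Lambda;\mathbb{Z}_2)$, which is nonzero by hypothesis. The first ingredient is a functoriality lemma: for any continuous map $g\colon\Lambda'\to\Lambda\setminus B$ from a connected compact CW-complex $\Lambda'$ whose image contains the distinguished point $\lambda_0$ of (A5) -- which necessarily lies in $\Lambda\setminus B$, since $L_{\lambda_0}$ invertible makes $F_{\lambda_0}$ a local diffeomorphism at $0$ -- the pulled-back family $\tilde f_n(\lambda',x):=f_n(g(\lambda'),x)$ satisfies (A1)--(A4) automatically and (A5) at any preimage of $\lambda_0$. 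Moreover $\tilde E^s(\pm\infty)=g^\ast E^s(\pm\infty)$, so Stiefel-Whitney classes pull back, and the pulled-back family has no bifurcation because $g(\Lambda')\cap B=\emptyset$; hence the Stiefel-Whitney corollary of Theorem \ref{thm:nonlin} forces $g^\ast\alpha=0$.

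The cohomological input comes next. In both scenarios (dimension bound or contractibility of $B$), the \v Cech cohomology $\check H^j(B;\mathbb Z_2)$ vanishes in a range: the dimension hypothesis gives $\check H^j(B;\mathbb Z_2)=0$ for $j>\dim_{\mathrm{cov}} B$ by the classical estimate from the dimension theory of compact metric spaces, while contractibility yields the stronger $\check H^j(B;\mathbb Z_2)=0$ for all $j>0$. Poincar\'e-Alexander-Lefschetz duality with $\mathbb Z_2$ coefficients in the compact $k$-manifold $\Lambda$ (so no orientability hypothesis is needed) gives $H^j(\Lambda,\Lambda\setminus B;\mathbb Z_2)\cong\check H^{k-j}(B;\mathbb Z_2)$, which vanishes for $j\leq i$ in the dimension case and for $j<k$ in the contractible case. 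The long exact sequence of the pair thus makes $H^i(\Lambda;\mathbb Z_2)\to H^i(\Lambda\setminus B;\mathbb Z_2)$ injective, so $\alpha|_{\Lambda\setminus B}\neq 0$. The same duality, combined with $\check H^{k-1}(B)=\check H^k(B)=0$ (which holds in both scenarios once $k\geq 2$), shows that $\Lambda\setminus B$ is connected and, being open in the manifold $\Lambda$, path-connected.

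To assemble the contradiction, use that $\Lambda\setminus B$ is open in the ANR $\Lambda$ and therefore has the homotopy type of a CW-complex, so the nonzero class $\alpha|_{\Lambda\setminus B}$ is detected by a continuous map $g_0\colon\Lambda'_0\to\Lambda\setminus B$ from a finite CW-complex with $g_0^\ast\alpha\neq 0$. Using path-connectedness of $\Lambda\setminus B$ and the fact that $\lambda_0\in\Lambda\setminus B$, adjoin an arc in $\Lambda\setminus B$ from a basepoint of $\Lambda'_0$ to $\lambda_0$ to produce a connected compact CW-complex $\Lambda'\supset\Lambda'_0$ and a continuous extension $g\colon\Lambda'\to\Lambda\setminus B$ with $g^\ast\alpha\neq 0$ and $\lambda_0\in g(\Lambda')$. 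The functoriality lemma then gives $g^\ast\alpha=0$, contradicting the construction. This proves simultaneously that $\dim_{\mathrm{cov}} B\geq k-i$ and that $B$ is not contractible.

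The hard part will be the duality input: the identification $H^j(\Lambda,\Lambda\setminus B;\mathbb Z_2)\cong\check H^{k-j}(B;\mathbb Z_2)$ for an arbitrary closed subset $B$ of a compact topological $k$-manifold requires Poincar\'e-Alexander-Lefschetz duality without orientation hypothesis and must be applied with care since $B$ need not be a submanifold, while the vanishing $\check H^j(B;\mathbb Z_2)=0$ for $j>\dim_{\mathrm{cov}}B$ rests on a classical but nontrivial result from the dimension theory of compact metric spaces. These are precisely the ingredients developed in \cite[Section~4]{NilsBif}, which can be cited for the detailed verification.
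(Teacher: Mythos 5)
Your plan is correct and is in essence the same Fitzpatrick--Pejsachowicz mechanism the paper uses: Poincar\'e--Alexander--Lefschetz duality with $\mathbb{Z}_2$-coefficients for the closed set $B$, the exact sequence of the pair $(\Lambda,\Lambda\setminus B)$, detection of the relevant class on $\Lambda\setminus B$ by a compact carrier through $\lambda_0$ (which lies in $\Lambda\setminus B$ by the implicit function theorem), and an application of Theorem \ref{thm:nonlin} to the pulled-back family to reach a contradiction. The differences are organizational rather than substantive. The paper splits into two cases according to whether $\Lambda\setminus B$ is connected: in the disconnected case it gets $\check{H}^{k-1}(B;\mathbb{Z}_2)\neq 0$ directly from duality (so $\dim B\geq k-1$, without invoking Theorem \ref{thm:nonlin} at all), and only in the connected case does it run the pullback argument, working homologically -- it chooses $\alpha\in H_i(\Lambda;\mathbb{Z}_2)$ pairing differently with the two Stiefel--Whitney classes, lifts it to $\beta\in H_i(\Lambda\setminus B;\mathbb{Z}_2)$, and uses compact supports of homology to get the finite connected carrier $P$. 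You instead argue contrapositively from the vanishing of $\check{H}^\ast(B;\mathbb{Z}_2)$ in the relevant range (small covering dimension, resp.\ contractibility), which yields connectedness of $\Lambda\setminus B$ for free via $H_1(\Lambda,\Lambda\setminus B;\mathbb{Z}_2)\cong\check{H}^{k-1}(B;\mathbb{Z}_2)=0$ and thereby unifies both conclusions in one run of the argument; this is a clean repackaging. One step you should tighten: the claim that the nonzero class $\alpha|_{\Lambda\setminus B}\in H^i(\Lambda\setminus B;\mathbb{Z}_2)$ is detected by a map from a finite CW-complex does not follow merely from $\Lambda\setminus B$ having the homotopy type of a CW-complex (for cohomology there are $\varprojlim^1$-type obstructions to detecting classes on finite pieces); the correct and easy justification here is the one the paper effectively uses -- with field coefficients, universal coefficients gives a class $\beta\in H_i(\Lambda\setminus B;\mathbb{Z}_2)$ pairing nontrivially with $\alpha|_{\Lambda\setminus B}$, and since homology is compactly supported, $\beta$ is carried by a map from a finite (and, after joining components by arcs in the path-connected set $\Lambda\setminus B$, connected) CW-complex, whose pullback of $\alpha$ is then nonzero. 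With that adjustment, and using the parametrised implicit function theorem (rather than just invertibility of $F_{\lambda_0}$ at $0$) to see $\lambda_0\notin B$, your proof goes through.
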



\section{The Index Bundle for Discrete Dynamical Systems}
The aim of this section is to compute the index bundle of families of linear discrete dynamical systems as \eqref{lin}. We will firstly recap the definition of the index bundle and discuss its properties on the Banach space $c_0(\mathbb{R}^N)$. Secondly, we prove an explicit family index theorem for \eqref{lin} which is of independent interest.

\subsection{The Family Index Theorem}\label{section:famthm}
Let us recall that a bounded operator $T\colon X\rightarrow Y$ acting between Banach spaces $X$, $Y$ is called \textit{Fredholm} if it has finite dimensional kernel and cokernel. The \textit{Fredholm index} of $T$ is the integer
\begin{align}\label{Fredind}
\ind(T)=\dim\ker(T)-\dim\coker(T).
\end{align}
We denote by $\Phi(X,Y)$ the subspace of $\mathcal{L}(X,Y)$ consisting of all Fredholm operators, and by $\Phi_k(X,Y)$, $k\in\mathbb{Z}$, the subset of all operators in $\Phi(X,Y)$ having index $k$. Let us recall that the sets $\Phi_k(X,Y)$ are the path components of $\Phi(X,Y)$.\\
Atiyah and Jänich introduced independently the \textit{index bundle} for families $L\colon \Lambda\rightarrow\Phi(X,Y)$ of Fredholm operators parametrised by a compact topological space $\Lambda$ (cf. e.g. \cite{KTheoryAtiyah}, \cite{indbundleIch}), which is a generalisation of the integral Fredholm index \eqref{Fredind} to families. The construction can be outlined as follows: By the compactness of $\Lambda$, there is a finite dimensional subspace $V\subset Y$ such that
\begin{align}\label{subspace}
\im(L_\lambda)+V=Y,\quad\lambda\in\Lambda.
\end{align}
Hence if we denote by $P$ the projection onto $V$, then we obtain a family of exact sequences
\[X\xrightarrow{L_\lambda}Y\xrightarrow{I_Y-P}\im(I_Y-P)\rightarrow 0,\]
and so a vector bundle $E(L,V)$ consisting of the union of the kernels of the maps $(I_Y-P)\circ L_\lambda$, $\lambda\in\Lambda$ (cf. \cite[\S III.3]{Lang}). If $\Theta(V)$ stands for the product bundle $\Lambda\times V$, then we obtain a $KO$-theory class
\begin{align}\label{defiindbund}
\ind(L):=[E(L,V)]-[\Theta(V)]\in KO(\Lambda),
\end{align}
which is called the \textit{index bundle} of $L$.\\
It is readily seen that
\begin{align}\label{dimindbund}
\dim E(L,V)=\dim(V)+\ind(L_\lambda),\quad\lambda\in\Lambda,
\end{align}
if $\Lambda$ is connected, and so $\ind(L)\in\widetilde{KO}(\Lambda)$ if and only if the operators $L_\lambda$ are of Fredholm index $0$. Moreover, if $\Lambda=\{\lambda_0\}$ is a singleton, then $KO(\{\lambda_0\})=\mathbb{Z}$ and

\[\ind(L)=\dim(E(L,V))-\dim(V)=\ind(L_{\lambda_0})\]
which shows that the definitions \eqref{defiindbund} and \eqref{Fredind} coincide in this case.\\
Let us mention for later reference the following properties of the index bundle, which can all be found in \cite{indbundleIch}:
\begin{enumerate}
\item[(i)] If $L_\lambda$ is invertible for all $\lambda\in\Lambda$, then $\ind(L)=0$.
\item[(ii)] If $\Lambda'$ is another compact topological space and $f\colon \Lambda'\rightarrow\Lambda$ a continuous map, then $f^\ast L\colon\Lambda'\rightarrow\Phi(X,Y)$ defined by $(f^\ast L)_\lambda=L_{f(\lambda)}$ is a family of Fredholm operators parametrised by $\Lambda'$ and
\begin{align}\label{naturality}
\ind(f^\ast L)=f^\ast\ind(L).
\end{align}
\item[(iii)] If $K\colon\Lambda\rightarrow\mathcal{L}(X,Y)$ is a family of compact operators, then
\[\ind(L+K)=\ind(L).\]
\item[(iv)]\label{logarithmic} If $Z$ is a Banach space and $M\colon\Lambda\rightarrow\Phi(Y,Z)$ a family of Fredholm operators, then
\[\ind(ML)=\ind(M)+\ind(L).\]
\item[(v)] If $\widetilde{X}$, $\widetilde{Y}$ are Banach spaces and $L_1\colon\Lambda\rightarrow\Phi(X,Y)$, $L_2\colon\Lambda\rightarrow\Phi(\widetilde{X},\widetilde{Y})$ are two families of Fredholm operators, then
\[\ind(L_1\oplus L_2)=\ind(L_1)+\ind(L_2)\in KO(\Lambda).\]
\end{enumerate}

Let us now denote by $[\Lambda,\Phi(X,Y)]$ the homotopy classes of all maps $L:\Lambda\rightarrow\Phi(X,Y)$. By the homotopy invariance of the index bundle, it follows that we actually obtain a well defined map

\[\ind:[\Lambda,\Phi(X,Y)]\rightarrow KO(\Lambda).\]
Atiyah and Jänich showed independently that this map is a bijection if $X=Y$ is a separable Hilbert space $H$. Their argument is as follows: they prove that the sequence

\[0\rightarrow[\Lambda,GL(H)]\hookrightarrow[\Lambda,\Phi(H)]\xrightarrow{\ind} KO(\Lambda)\rightarrow 0\]
is exact, and then use Kuiper's theorem, which states that $GL(H)$ is contractible, to conclude that $\ind$ must indeed be a bijection in this case.\\
The validity of the Atiyah-Jänich Theorem has been investigated e.g. in \cite{Banach Bundles} (cf. also \cite{indbundleIch}), however, the best possible result for general Banach spaces $E$ is the exactness of the sequence

\begin{align}\label{exactBanach}
0\rightarrow[\Lambda,GL(E)]\hookrightarrow[\Lambda,\Phi(E)]\xrightarrow{\ind} KO(\Lambda),
\end{align}
which contains much less information on how much the homotopy classes $[\Lambda,\Phi(E)]$ are classified by $KO(\Lambda)$. In particular, $[\Lambda,GL(E)]$ is not trivial in general (cf. e.g. \cite{Banach Bundles}).  We now want to point out that for $E=c_0(\mathbb{R}^N)$ the situation is as good as for separable Hilbert spaces.

\begin{prop}
The index bundle induces a bijection

\[\ind:[\Lambda,\Phi(c_0(\mathbb{R}^N))]\rightarrow KO(\Lambda).\]
\end{prop}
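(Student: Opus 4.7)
The plan is to imitate the classical Atiyah-J\"anich argument for separable Hilbert spaces. The exact sequence \eqref{exactBanach} reduces the injectivity of $\ind$ to showing $[\Lambda,GL(c_0(\mathbb{R}^N))]=0$; surjectivity must be established separately by a direct construction of Fredholm families realising prescribed $KO$-classes, and this construction will parallel the Hilbert-space case once contractibility of $GL$ is in hand.

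For injectivity, I would first observe that $c_0(\mathbb{R}^N)$ is linearly homeomorphic to the classical sequence space $c_0$: choosing the sup-norm on $\mathbb{R}^N$, any bijection between the countable index sets $\mathbb{Z}\times\{1,\ldots,N\}$ and $\mathbb{N}$ induces an isometric isomorphism of Banach spaces. Hence $GL(c_0(\mathbb{R}^N))\cong GL(c_0)$ as topological groups, and the $c_0$-analogue of Kuiper's theorem, due to Neubauer, asserts that $GL(c_0)$ is contractible. Consequently every continuous map $\Lambda\to GL(c_0(\mathbb{R}^N))$ is null-homotopic and \eqref{exactBanach} gives injectivity of $\ind$ at the middle term.

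For surjectivity, given $x\in KO(\Lambda)$, Swan's theorem lets one write $x=[E]-[\Theta(\mathbb{R}^n)]$ with $E$ a vector bundle over the compact CW complex $\Lambda$. Pick a complementary bundle $F$ with $E\oplus F\cong\Theta(\mathbb{R}^k)$, embed $\Theta(\mathbb{R}^k)$ as a finite-dimensional subspace of $c_0(\mathbb{R}^N)$, and construct $L\colon\Lambda\to\Phi(c_0(\mathbb{R}^N))$ by combining a fixed background Fredholm operator of the appropriate index (a shift on the tail of $c_0(\mathbb{R}^N)$ will do, which by \eqref{defiindbund} and property (v) accounts for the trivial summand) with a $\lambda$-dependent finite-rank perturbation engineered so that $\ker L_\lambda=E_\lambda$. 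The logarithmic property (iv) together with the compact-perturbation invariance (iii) and the direct-sum formula (v) then verify $\ind(L)=x$.

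The main obstacle is the appeal to Neubauer's contractibility theorem for $GL(c_0)$: this is a nontrivial classical result which one simply cites, as there is no hope of deducing it from the formal properties of the index bundle alone. Once it is invoked, the remaining surjectivity construction parallels the Hilbert-space case and involves only standard finite-rank manipulations combined with the formal properties of $\ind$.
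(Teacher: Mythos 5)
Your reduction of injectivity to the contractibility of $GL(c_0)$ is exactly the paper's route: the authors identify $c_0(\mathbb{R}^N)$ with $c_0(\mathbb{R})$ and cite Arlt's theorem (you attribute it to Neubauer; the relevant reference in the paper is Arlt's 1966 paper on the contractibility of $GL(c_0)$ -- a minor point), after which the exact sequence \eqref{exactBanach} gives injectivity. The genuine problem is in your surjectivity construction. You propose to realise a class $[E]-[\Theta(\mathbb{R}^n)]$ by taking a \emph{fixed} background Fredholm operator (a shift) and adding a $\lambda$-dependent \emph{finite-rank} perturbation arranged so that $\ker L_\lambda=E_\lambda$, and you even cite the compact-perturbation invariance (iii) in the verification. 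But property (iii) is precisely what rules this out: a finite-rank perturbation of a constant family has the same index bundle as the constant family, which is a class pulled back from a point, i.e.\ of the form $m[\Theta(\mathbb{R})]$. Such a class can never equal $[E]-[\Theta(\mathbb{R}^n)]$ unless $E$ is stably trivial, so your construction cannot produce general elements of $KO(\Lambda)$. A related obstruction blocks the variant ``conjugate a fixed shift by automorphisms moving $E_\lambda$ onto a fixed coordinate subspace'': a continuous family of such automorphisms exists only when $E$ is trivial.

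What the paper does instead is to build a $\lambda$-dependent operator of \emph{infinite} rank: writing the target class as $[\Theta^k]-[F]$, choosing a family of projections $P_\lambda$ of $\mathbb{R}^n$ with image $F_\lambda$, and letting $\hat P_\lambda$ act blockwise on \emph{all} the $n$-dimensional blocks $X_i$ of $c_0(\mathbb{R})$, they set $M_\lambda=r^n\hat P_\lambda+(I-\hat P_\lambda)$. This $M_\lambda$ is injective with $\im(M_\lambda)\oplus F_\lambda=c_0(\mathbb{R})$, so $\ind(M)=[\Theta^0]-[F]$, and since the twist $(r^n-I)\hat P_\lambda$ is not compact there is no conflict with (iii). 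Composing with the constant surjective family $l^k$ and using the logarithmic property (iv) gives $\ind(l^kM)=[\Theta^k]-[F]$, which exhausts $KO(\Lambda)$. So your overall plan (Kuiper-type theorem plus explicit shift-based families) is the right one, but the surjectivity step needs this cokernel realisation by a projection-twisted right shift repeated over infinitely many blocks, not a finite-rank perturbation of a fixed operator.
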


\begin{proof}
We only prove the assertion for $N=1$ and leave it to the reader to show that $c_0(\mathbb{R})$ is isomorphic to $c_0(\mathbb{R}^N)$ for any $N\in\mathbb{N}$, which shows the assertion in the general case.\\
We note at first that Arlt proved in \cite{Arlt} that $c_0(\mathbb{R})$ is a \textit{Kuiper space}, i.e. $GL(c_0(\mathbb{R}))$ is contractible as a topological space. Hence $[\Lambda,GL(c_0(\mathbb{R}))]$ is trivial and so it remains to show that $\ind:[\Lambda,\Phi(c_0(\mathbb{R}))]\rightarrow KO(\Lambda)$ is surjective. To this aim, let us introduce a Schauder basis $\{e_k\}$ of $c_0(\mathbb{R})$ by setting for $k\in\mathbb{N}$

\begin{align}\label{basisc0}
e_0=\{\delta_{n,0}\}_{n\in\mathbb{Z}},\qquad e_{2k}=\{\delta_{n,k}\}_{n\in\mathbb{Z}},\qquad e_{2k+1}=\{\delta_{n,-k}\}_{n\in\mathbb{Z}}.
\end{align}
Let us recall that every element in $KO(\Lambda)$ can be written in the form $[\Theta^k]-[E]$ for a bundle $E$ over $\Lambda$ and some non-negative integer $k$, where we use $\Theta^k$ to abbreviate $\Theta(\mathbb{R}^k)$. We now claim that for a given bundle $F$ over $\Lambda$ there are families $L,M:\Lambda\rightarrow\Phi(c_0(\mathbb{R}))$ such that $\ind(L)=[\Theta^k]-[\Theta^0]$ and $\ind(M)=[\Theta^0]-[F]$. Then by the logarithmic property \eqref{logarithmic}

\[\ind(LM)=\ind(L)+\ind(M)=[\Theta^k]-[F]\]
showing the surjectivity of $\ind:[\Lambda,\Phi(c_0(\mathbb{R}))]\rightarrow KO(\Lambda)$.\\
For constructing the family $L$ we let $l,r:c_0(\mathbb{R})\rightarrow c_0(\mathbb{R})$ be the left shift and the right shift with respect to the basis \eqref{basisc0}, respectively. Then, if we set $L=l^k$, the $k$-fold left shift, we obtain a constant family of surjective operators which have as kernel the space $\spann\{e_0,\ldots, e_{k-1}\}$, and consequently, $\ind(L)=[\Theta^k]-[\Theta^0]$.\\
For constructing the family $M$, we let $G$ be a vector bundle over $\Lambda$ such that $F\oplus G\cong\Theta^n$ for some $n\in\mathbb{N}$, and we let $P:\Lambda\rightarrow M(n,\mathbb{R})$ be a family of idempotent matrices such that $\im(P_\lambda)=F_\lambda$ and $\ker(P_\lambda)=G_\lambda$ for $\lambda\in\Lambda$. We set for $i\in\mathbb{N}$

\[X_i:=\spann\{e_{(i-1)n},\ldots, e_{in-1}\}\]
and define the family $M$ by

\[M_\lambda=r^n\hat P_\lambda+(I_{X_n}-\hat P_\lambda),\]
where $\hat P$ is the matrix family $P$ applied to the elements in $X_n$. Clearly, each $M_\lambda$ is injective and moreover $\im(M_\lambda)\oplus F_\lambda=c_0(\mathbb{R})$, where $F_\lambda$ is considered as a subspace of $X_1$. It is not very difficult to see that $M$ is a continuous family of bounded operators. Consequently, $M_\lambda\in\Phi(c_0(\mathbb{R}))$ and $\ind(M)=[\Theta^0]-[F]$ as we claimed.
\end{proof}
Let us now assume that $a_n\colon\Lambda\rightarrow M(N,\mathbb{R})$, $n\in\mathbb{Z}$, is a sequence of continuous families of $N\times N$ matrices such that Assumption (A3) from the previous section holds. We consider the linear operators
\begin{align}\label{L}
L_\lambda\colon c_0(\mathbb{R}^N)\rightarrow c_0(\mathbb{R}^N),\qquad (L_\lambda x)_n=x_{n+1}-a_n(\lambda)x_n,\,n\in\mathbb{Z},
\end{align}
which are easily seen to be bounded under Assumption (A3). Moreover, we obtain a continuous map $L\colon \Lambda\rightarrow\mathcal{L}(c_0(\mathbb{R}^N))$ and the aims of this section are to show that $L$ is a family of Fredholm operators and to find a formula for its index bundle $\ind(L)\in KO(\Lambda)$. Let us recall from the previous section that the families of matrices $a(\lambda,\pm\infty)$ define vector bundles $E^s(\pm\infty)$ over the parameter space $\Lambda$ if Assumption (A3) is satisfied.

\begin{theorem}\label{thm-lin}
Let $a_n\colon\Lambda\rightarrow M(N,\mathbb{R})$ be a sequence of continuous maps satisfying Assumption \emph{(A3)}. Then
\begin{itemize}
	\item[$(i)$] the operators $L_\lambda$, $\lambda\in\Lambda$, in \eqref{L} are Fredholm,
	\item[$(ii)$] the index bundle of $L$ is	
	\[\ind(L)=[E^s(+\infty)]-[E^s(-\infty)]\in KO(\Lambda).\]
\end{itemize}
\end{theorem}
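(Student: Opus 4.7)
I would first treat the case where $a_n(\lambda)\equiv a(\lambda)$ is constant and hyperbolic: then $L_\lambda$ is invertible, with inverse the discrete Green's function
\[
(L_\lambda^{-1} y)_n = \sum_{k<n} a(\lambda)^{n-1-k} P^s(\lambda) y_k \;-\; \sum_{k\geq n} a(\lambda)^{n-1-k} P^u(\lambda) y_k,
\]
whose convergence and $c_0$-boundedness follow from the spectral estimates for $a(\lambda)|_{E^s}$ and $a(\lambda)^{-1}|_{E^u}$ in suitable adapted norms. Gluing two such Green's functions produces a parametrix for the ``asymptotically constant'' model $\tilde L_\lambda$ in which $a_n(\lambda) = a(\lambda,+\infty)$ for $n\geq N_0$ and $a_n(\lambda) = a(\lambda,-\infty)$ for $n\leq -N_0$, so $\tilde L_\lambda$ is Fredholm. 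The difference $L_\lambda - \tilde L_\lambda$ is the diagonal multiplication operator with coefficients $a_n(\lambda) - \tilde a_n(\lambda)\to 0$ as $|n|\to\infty$; such operators are norm limits of their finite-rank truncations and hence compact on $c_0(\mathbb{R}^N)$. Fredholmness of $L_\lambda$ then follows from stability of the Fredholm property under compact perturbation.

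\textbf{Part (ii) --- reduction to a model.} For the index bundle I would invoke homotopy invariance. The linear homotopy $a_n^{(s)}(\lambda) = (1-s)\,a_n(\lambda) + s\,\tilde a_n(\lambda)$, $s\in[0,1]$, has the same hyperbolic limits $a(\lambda,\pm\infty)$ for every $s$, so by part (i) it yields a continuous family of Fredholm operators over $[0,1]\times\Lambda$. Hence $\ind(L)=\ind(\tilde L)$, and we may assume $a_n(\lambda) = a(\lambda,+\infty)$ for $n\geq 0$ and $a_n(\lambda) = a(\lambda,-\infty)$ for $n<0$. For this model, iterating the recursion on each half-line identifies $\ker\tilde L_\lambda$ via $x\mapsto x_0$ with the subspace $E^s(\lambda,+\infty)\cap E^u(\lambda,-\infty)\subset\mathbb{R}^N$, and a symmetric description holds for the cokernel through a finite-dimensional mismatch between the two half-line solutions.

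\textbf{Part (ii) --- virtual bundle identification and the main obstacle.} To extract the virtual class from this picture, I would take as trivializing subspace in \eqref{defiindbund} the copy $V\subset c_0(\mathbb{R}^N)$ of sequences supported at $n=0$, so that $V\cong\Theta(\mathbb{R}^N)$, and check that $\im\tilde L_\lambda + V = c_0(\mathbb{R}^N)$: given $y\in c_0(\mathbb{R}^N)$ one solves on each half-line by shooting from an appropriate $x_0$ in the respective stable/unstable subspace and absorbs the resulting jump at $n=0$ into $V$. The shooting parametrization then fits $E(\tilde L, V)$ into short exact sequences of bundles involving $E^s(+\infty)$, $E^u(-\infty)$ and $\Theta(\mathbb{R}^N)$; using \eqref{bundlesum} to rewrite $[E^u(-\infty)] = [\Theta(\mathbb{R}^N)] - [E^s(-\infty)]$ gives
\[
\ind(\tilde L) = [E(\tilde L, V)] - [\Theta(\mathbb{R}^N)] = [E^s(+\infty)] - [E^s(-\infty)] \in KO(\Lambda).
\]
The main obstacle is the verification that these fibrewise shooting identifications assemble into genuine isomorphisms of vector bundles over $\Lambda$, rather than merely pointwise identifications. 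This continuity rests on the continuous dependence of the spectral projections \eqref{comP} on $\lambda$, on Assumption (A4) to keep the dimensions of the bundles locally constant, and on the uniformity in $\lambda$ of the exponential-dichotomy estimates provided by Assumption (A3).
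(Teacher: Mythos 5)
Your argument is essentially correct, but it follows a genuinely different route from the paper's. For Fredholmness you glue the constant-coefficient Green's functions into a parametrix for the model operator $\widetilde{L}_\lambda$ and then perturb compactly; the paper instead writes $L_\lambda=I(L^+_\lambda\oplus L^-_\lambda)J$ through the half-line subspaces $X,Y,Z$, where $I$ is an isomorphism and $J$ is Fredholm of index $-N$, and proves via Lemma \ref{lemma-Alberto} (an explicit right inverse whose mapping into $c_0$ is checked by an $\ell^1$-convolution/Young's inequality estimate) that $L^\pm_\lambda$ are surjective with kernels $E^s(\lambda,+\infty)$ and $E^u(\lambda,-\infty)$. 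For part (ii) the paper then simply feeds $\ind(L^\pm)$ into the logarithmic and direct-sum properties of the index bundle, whereas you compute $\ind(\widetilde{L})$ directly from the definition \eqref{defiindbund}, taking $V$ to be the sequences supported at $n=0$ and identifying $E(\widetilde{L},V)_\lambda\cong E^s(\lambda,+\infty)\oplus E^u(\lambda,-\infty)$ via $x\mapsto(x_1,x_0)$, which together with \eqref{bundlesum} gives the same class; your route avoids the composition formula at the price of having to verify the transversality $\im\widetilde{L}_\lambda+V=c_0(\mathbb{R}^N)$ by hand, while the paper's factorisation makes the bookkeeping of kernels and cokernels automatic. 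Also, your replacement of the paper's compact-perturbation step (property (iii) of the index bundle) by a linear homotopy $a_n^{(s)}$ works, but is slightly roundabout since the difference $L_\lambda-\widetilde{L}_\lambda$ is already compact.

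Two small corrections. First, the real analytic content is not where you place it: the ``main obstacle'' you flag (assembling the fibrewise identifications into bundle isomorphisms) is immediate, since $x\mapsto(x_1,x_0)$ and its inverse $(u,v)\mapsto\{a(\lambda,+\infty)^{n-1}u,\,a(\lambda,-\infty)^{n}v\}$ are visibly continuous in $\lambda$; what actually requires work is the surjectivity statements behind ``one solves on each half-line by shooting'', i.e. that the dichotomy Green's operators map $c_0$ into $c_0$ with decay at infinity --- this is exactly the $\ell^1$-estimate proved in Lemma \ref{lemma-Alberto}, and your sketch should make it explicit rather than appeal to shooting, which alone does not produce decaying solutions of the inhomogeneous equation. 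Second, Assumption (A4) plays no role here and is not among the hypotheses of Theorem \ref{thm-lin}: the local constancy of $\dim E^s(\lambda,\pm\infty)$ in $\lambda$ follows from the continuity of the spectral projections \eqref{comP}, not from (A4), which only matches the dimensions at $+\infty$ and $-\infty$ and is needed later to get index $0$ for the bifurcation theorem.
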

\noindent
\begin{rem}\label{rem:dim}
Let us finally point out that we obtain from the previous theorem immediately the Fredholm index of the operators $L_\lambda$. Indeed, if $\iota\colon\{\lambda\}\hookrightarrow\Lambda$ denotes the canonical inclusion, then $L_\lambda=\iota^\ast L$, where we use the notation from the second property of the index bundle from above. Hence
\begin{align*}
\ind(L_\lambda)&=\iota^\ast(\ind L)=\iota^\ast([E^s(+\infty)]-[E^s(-\infty)])=\dim(E^s(+\infty))-\dim(E^s(-\infty))\\
&=\dim(E^s(\lambda,+\infty))-\dim(E^s(\lambda,-\infty))
\end{align*}
in $KO(\{\lambda\})=\mathbb{Z}$. In particular, $\ind(L)\in\widetilde{KO}(\Lambda)$ under Assumption \emph{(A4)}.
\end{rem}


\subsection{Proof of Theorem \ref{thm-lin}}
We divide the proof of Theorem \ref{thm-lin} into four steps.

\subsubsection*{Step 1: Approximation}
We define a family of matrices $\widetilde{a}_n\colon \Lambda\rightarrow M(N,\mathbb{R})$, $n\in\mathbb{Z}$, by
\begin{equation}\label{tildea}
\widetilde{a}_n(\lambda)=\begin{cases}
a(\lambda,+\infty),\quad &n\geq0\\
a(\lambda,-\infty),\quad &n< 0
\end{cases}
\end{equation}
and we let $\widetilde{L}\colon\Lambda\rightarrow\mathcal{L}(c_0(\mathbb{R}^N))$ be the family of linear operators defined by
\[(\widetilde{L}_\lambda x)_n=x_{n+1}-\widetilde{a}_n(\lambda)x_n,\quad n\in\mathbb{Z}.\]
We claim that $K_\lambda:=L_\lambda-\widetilde{L}_\lambda\in\mathcal{L}(c_0(\mathbb{R}^N))$ is compact as it is the limit of the sequence of finite rank operators $\{K^m_\lambda\}_{m\in\mathbb{N}}$ given by
\begin{align*}
(K^m_\lambda x)_n=\begin{cases}
(a_n(\lambda)-\widetilde{a}_n(\lambda))x_n,\quad &|n|\leq m\\
0,\quad &|n|>m.
\end{cases}
\end{align*}
Indeed, as $\lim\limits_{n\rightarrow\pm\infty}(a_n(\lambda)-\widetilde{a}_n(\lambda))=0$, there is for every $\varepsilon>0$ an $m_0\in\mathbb{N}$ such that
\[\|K_\lambda-K^{m_0}_\lambda\|\leq\sup_{|n|>m_0}\|a_n(\lambda)-\widetilde{a}_n(\lambda)\|<\varepsilon.\]
Moreover, it is readily seen from the definition that
\[\|K_\lambda-K^m_\lambda\|\geq\|K_\lambda-K^{m+1}_\lambda\|,\quad m\in\mathbb{N},\]
showing that $\|K_\lambda-K^m_\lambda\|<\varepsilon$ for all $m\geq m_0$.\\
As $K_\lambda$ is compact, we see that $L_\lambda$ is Fredholm if and only if $\widetilde{L}_\lambda$ is Fredholm. Moreover, $\ind(L)=\ind(\widetilde{L})\in KO(\Lambda)$ by the property (iii) of the index bundle from Section \ref{section:famthm}. Hence we can assume from now on without loss of generality that the maps $a_n\colon \Lambda\rightarrow M(N,\mathbb{R})$ in the definition of the operator $L$ are of the form \eqref{tildea}.

\subsubsection*{Step 2: The families $L^\pm$}
We consider the closed subspaces of $c_0(\mathbb{R}^N)$ given by
\begin{align*}
X&=\{x\in c_0(\mathbb{R}^N) \mid\, x_n=0,\, n<0\},\\
Y&=\{x\in c_0(\mathbb{R}^N)\mid \, x_n=0,\, n>0\},\\
Z&=\{x\in c_0(\mathbb{R}^N)\mid \, x_n=0,\, n\geq 0\},
\end{align*}
and the bounded linear operators
\begin{align*}
L^+_\lambda\colon X\rightarrow X,\quad(L^+_\lambda x)_n=\begin{cases}
x_{n+1}-a(\lambda,\infty)x_n,\, &n\geq 0\\
0,\, &n<0
\end{cases}
\end{align*}
and
\begin{align*}
L^-_\lambda\colon Y\rightarrow Z,\quad(L^-_\lambda x)_n=\begin{cases}
0,\, &n\geq0\\
x_{n+1}-a(\lambda,-\infty)x_n,\, &n<0.
\end{cases}
\end{align*}
Note that the operators $L^\pm_\lambda$ are strictly related to $L_\lambda$, and the aim of this second step of our proof is to show that they are Fredholm. As we will see in the subsequent step, this implies the Fredholm property of $L_\lambda$ quite straightforwardly.\\
For proving that $L^\pm_\lambda$ are Fredholm, we need the following lemma.
\begin{lemma}\label{lemma-Alberto}
Let $a\in GL(N,\mathbb{R})$ be a hyperbolic matrix. Then the operator
\begin{align*}
L\colon X\rightarrow X,\quad(Lx)_n=\begin{cases}
x_{n+1}-a\,x_n,\, &n\geq0\\
0,\, &n<0
\end{cases}
\end{align*}
is surjective and
\[\ker(L)=\{x\in X \mid\, x_{n}=a^nx_0,\, n\in\mathbb{N},\, x_0\in E^s(a)\}.\]
\end{lemma}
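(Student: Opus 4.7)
The plan is to treat the kernel description and surjectivity separately, in both cases exploiting the splitting $\mathbb{R}^N=E^s(a)\oplus E^u(a)$ from \eqref{sum}. Both subspaces are $a$-invariant, the restriction $a|_{E^s(a)}$ has spectral radius strictly less than $1$, and $a^{-1}|_{E^u(a)}$ has spectral radius strictly less than $1$, so one has exponential estimates $\|a^j|_{E^s(a)}\|\leq C\rho^j$ and $\|a^{-j}|_{E^u(a)}\|\leq C\rho^j$ for some $\rho<1$ and $j\geq 0$. The whole proof is a discrete analogue of the variation-of-constants argument for systems with an exponential dichotomy.

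For the kernel, assume $Lx=0$ for some $x\in X$. For $n\geq 0$ the defining relation forces $x_{n+1}=a\,x_n$, so $x_n=a^n x_0$. Since $x\in c_0(\mathbb{R}^N)$, one needs $a^n x_0\to 0$ as $n\to\infty$, and by the characterisation of the stable subspace recalled in Section 2 this is equivalent to $x_0\in E^s(a)$. Conversely, if $x_0\in E^s(a)$, the same characterisation gives $a^n x_0\to 0$ and the sequence $x_n=a^n x_0$ extended by zero for $n<0$ lies in $X$ and is killed by $L$.

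For surjectivity, given $y\in X$, decompose $y_n=y_n^s+y_n^u$ along the splitting. The recursion $x_{n+1}-ax_n=y_n$ decouples into two independent ones that I would solve by
\begin{align*}
x_n^s &= \sum_{k=0}^{n-1} a^{\,n-1-k}\,y_k^s, \qquad n\geq 0,\\
x_n^u &= -\sum_{j=0}^{\infty} a^{-(j+1)}\,y_{n+j}^u, \qquad n\geq 0,
\end{align*}
i.e.\ the forward solution on $E^s(a)$ with initial value $x_0^s=0$ and the unique backward-stable solution on $E^u(a)$; the second series converges absolutely by the estimate above. A direct substitution verifies the recursion, and I would set $x_n=x_n^s+x_n^u$ for $n\geq 0$ and $x_n=0$ for $n<0$, which is consistent with $X$ and with $L$ being defined to vanish in negative indices. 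The main technical point, and essentially the only place where an argument is needed rather than bookkeeping, is checking $x\in c_0(\mathbb{R}^N)$: for the unstable part this is immediate from $\|x_n^u\|\leq C\sum_j\rho^j\|y_{n+j}^u\|$ and $y^u\in c_0$, while for the stable part one splits the sum at $k\approx n/2$ and uses exponential decay on one half and $y_k^s\to 0$ on the other. No genuine obstacle is expected beyond this standard dichotomy estimate.
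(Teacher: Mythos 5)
Your construction is essentially the paper's own proof: the formula $x_n=\sum_{k=0}^{n-1}a^{n-1-k}P^sy_k-\sum_{j\geq 0}a^{-1-j}P^uy_{n+j}$ is exactly the right inverse $M$ used there, and the kernel argument via the characterisation of $E^s(a)$ is the same. The only (inessential) difference is how decay of $Mx$ is checked: you split the sums directly using the exponential dichotomy estimates, whereas the paper rewrites $Mx$ as a convolution with an $\ell^1$ matrix kernel and invokes the fact that such convolutions map $c_0$ to $c_0$; both verifications are routine and correct.
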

\begin{proof}
We denote by $P^u$ the projection in $\mathbb{R}^N$ onto $E^u(a)$, by $P^s$ the projection onto $E^s(a)$, and we note that $P^u+P^s=I_N$ as $a$ is hyperbolic. We set
\begin{align*}
(Mx)_n=\begin{cases}
-\sum\limits^\infty_{k=0}a^{-1-k}P^ux_k,\, &n=0\\
\sum\limits^{n-1}_{k=0}a^{n-1-k}P^sx_k-\sum\limits^\infty_{k=n}a^{n-1-k}P^ux_k,\, &n>0\\
0,\, &n<0
\end{cases}
\end{align*}
and note that for $n=0$
\begin{align*}
(LMx)_0&=P^sx_0-\sum^\infty_{k=1}{a^{-k}P^ux_k}+\sum^\infty_{k=0}{a^{-k}P^ux_k}=P^sx_0+P^ux_0=x_0,
\end{align*}
and for $n>0$
\begin{align*}
(LMx)_n&=\sum^{n}_{k=0} a^{n-k}P^sx_k-\sum^\infty_{k=n+1}{a^{n-k}P^ux_k}-\sum^{n-1}_{k=0}a^{n-k}P^sx_k+\sum^\infty_{k=n}{a^{n-k}P^ux_k}\\
&=P^sx_n+P^ux_n=x_n,
\end{align*}
as well as $(LMx)_n=0$ for $n<0$. Hence, $LMx=x$, and in order to obtain the surjectivity of $L$, we only need to prove that $M$ maps $X$ into $X$. As $(Mx)_n=0$ for $n<0$ by definition, it remains to show that $(Mx)_n\rightarrow 0$ as $n\rightarrow\infty$. \newline\indent
For this purpose, we want to recall at first the definition of the convolution $f\ast x$ for $f\in  \ell_1(M(N,\R))\subset c_0(M(N,\R))$ and $x\in c_0(\R^N)$, where $\ell_1(M(N,\R))$ denotes the Banach space of all summable sequences $\{M_n\}_{n\in\mathbb{Z}}\subset M(N,\R)$ with respect to the usual norm on $M(N,\R)$. If $f\in \ell_1(M(N,\R))$ and $x\in c_0(\R^N)$, then $f\ast x\colon \Z\to \R^N$ is defined by
\begin{equation*}
(f\ast x)(n)=\sum_{k\in\Z} f(n-k)x_k.
\end{equation*}
Young's inequality implies that
\begin{equation*}
\|f\ast x\|_1=\sum_{n\in\Z}\|(f\ast x)(n)\|\leq \left(\sum_{k\in\Z}\|f(k)\|\right)\cdot \left(\sup_{k\in \Z} \|x_k\|\right)=\|f\|_1\cdot \|x\|
\end{equation*}
and hence $f\ast x\in \ell_1(\R^N)\subset c_0(\R^N)$.\\
We now proceed with the proof that $(Mx)_n \xrightarrow\infty 0$ as $n\rightarrow\infty$.
We define a map $f\colon \Z\to M(N,\mathbb{R})$ by

\begin{equation}\label{def-f(n)}
f(n)=a^{n-1}(\mathbbm{1}_{\N}(n)I_N-P^u).
\end{equation}
Since the spectral radius theorem for a hyperbolic matrix $a\in M(N,\R)$ implies that
\begin{align*}
\lim_{n\to\infty}\|(a|E^s(a))^n\|^{1/n}&=\alpha_s:=\max |\sigma(a|E^s(a))|<1,\\
\lim_{n\to\infty}\|(a|E^u(a))^{-n}\|^{1/n}&=\alpha_u:=\max |\sigma((a|E^u(a))^{-1})|<1,
\end{align*}
it follows that for any $\alpha\in (\max\{\alpha_s,\alpha_u\},1)$ there exists $n_0>0$ such that  for $k\geq n_0$

\begin{align*}
\left\|(a|E^s(a))^k\right\|\leq \alpha^k \text{ and } \left\|(a|E^u(a))^{-k}\right\|\leq \alpha^k.
\end{align*}
Moreover,

\begin{equation*}
a^kP^s=a^kP^sP^s=(a|E^s(a))^kP^s\quad \text{ and }\quad a^{-k}P^u=a^{-k}P^uP^u=(a|E^u(a))^{-k}P^u,
\end{equation*}
and so

\begin{align*}
\sum_{k\in\Z}\|f(k)\|&=\sum_{|k|\leq n_0}\|f(k)\|+\sum_{|k|>n_0}\|f(k)\|=\\ &=\sum_{|k|\leq n_0}\|f(k)\|+\sum_{|k|>n_0}\|a^{k-1}(\mathbbm{1}_{\N}(k)I_{N}-P^u)\|\\
&\leq\sum_{|k|\leq n_0}\|f(k)\|+\sum_{k<-n_0}\|a^{k-1}P^u\|+\sum_{k>n_0}\|a^{k-1}P^s\|\\
&=\sum_{|k|\leq n_0}\|f(k)\|+\sum_{k<-n_0}\|(a|E^u(a))^{k-1}P^u\|+\sum_{k>n_0}\|(a|E^s(a))^{k-1} P^s\|\leq\\
&\leq\sum_{|k|\leq n_0}\|f(k)\|+\sum_{k=n_0+1}^{\infty}\alpha^{k+1}\|P^u\|+\sum_{k=n_0+1}^{\infty}\alpha^{k-1}\|P^s\|\\
&=\sum_{|k|\leq n_0}\|f(k)\|+(\alpha^{n_0}+\alpha^{n_0+1})\|P^s\|+(\|P^s\|+\|P^u\|)\frac{\alpha^{n_0+2}}{1-\alpha}<\infty,
\end{align*}
which implies that $f\in \ell_1(M(N,\R))$. Finally, we observe that for $n\in\N$, $x\in X$ and $f$ from \eqref{def-f(n)}
\begin{align*}
(f\ast x)(n)&=\sum_{k\in\Z} f(n-k)x_k=\sum_{k\geq 0} f(n-k)x_k=\sum_{k\geq 0} a^{n-k-1}(\mathbbm{1}_{\N}(n-k)I_N-P^u)x_k\\
&=\sum_{k=0}^{n-1} a^{n-k-1}(1\cdot I_N-P^u)x_k+\sum_{k=n}^{\infty} a^{n-k-1}(0\cdot I_N-P^u)x_k\\
&=\sum_{k=0}^{n-1} a^{n-k-1}P^sx_k-\sum_{k=n}^{\infty} a^{n-k-1}P^ux_k=(Mx)_n.
\end{align*}
As $(f\ast x)(n)\rightarrow 0$ for $n\rightarrow\infty$ since $f\in\ell_1(M(N,\R))$, we see that indeed $Mx\in c_0(\mathbb{R}^N)$.\\
Finally, the assertion on the kernel is an immediate consequence of the definition of $L$. We only need to note that $\{a^nx_0\}$ does not converge to $0$ if $P^ux_0\neq 0$.
\end{proof}
\noindent
We see from Lemma \ref{lemma-Alberto} that $L^+_\lambda$ is surjective, and its kernel is given by
\begin{align}\label{kernelL+}
\ker(L^+_\lambda)=\{x\in X \mid\, x_{n}=a(\lambda,+\infty)^nx_0,\, n\in\mathbb{N},\, x_0\in E^s(\lambda,+\infty)\}
\end{align}
which is isomorphic to the finite dimensional space $E^s(\lambda,+\infty)$. Hence $L^+_\lambda$ is Fredholm.\\
Moreover, as $L^+_\lambda$ is surjective for all $\lambda\in\Lambda$, we see that the trivial subspace $\{0\}\subset X$ is transversal to the images of $L^+_\lambda$ as in \eqref{subspace}. Hence $\ind(L^+)=[E(L^+,\{0\})]-[\Theta(\{0\})]$, and as the fibres of $E(L^+,\{0\})$ are the kernels of $L^+_\lambda$ given by \eqref{kernelL+}, we see that the map

\[E(L^+,\{0\})\rightarrow E^s(+\infty),\quad (\lambda,\{x_n\}_{n\in\mathbb{Z}})\mapsto (\lambda,x_0)\]
is a bundle isomorphism. Consequently,

\begin{align}\label{indbundleL+}
\ind(L^+)=[E^s(+\infty)]-[\Theta(\{0\})]\in KO(\Lambda).
\end{align}
\noindent

Our next aim is to show the Fredholm property for the operators $L^-_\lambda$, where we need a lemma that is similar to Lemma \ref{lemma-Alberto}.
\begin{lemma}\label{lemma-AlbertoII}
Let $a\in M(N,\mathbb{R})$ be a hyperbolic matrix. Then the operator
\begin{align*}
L\colon Y\rightarrow Y,\quad(Lx)_n=\begin{cases}
0,\, &n>0\\
x_{n-1}-a^{-1}\,x_n,\, &n\leq0
\end{cases}
\end{align*}
is surjective and
\[\ker(L)=\{x\in Y \mid\, x_{n}=a^nx_0,\, n<0,\, x_0\in E^u(a)\}.\]
\end{lemma}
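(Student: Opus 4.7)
The operator in Lemma \ref{lemma-AlbertoII} is the ``time-reversed'' analogue of the operator in Lemma \ref{lemma-Alberto}: it acts on the half-line $Y$ of sequences vanishing for positive indices, uses the backward difference $x_{n-1}-a^{-1}x_n$ in place of $x_{n+1}-ax_n$, and its natural associated dynamics is $x_n=ax_{n-1}$. The strategy is therefore to conjugate $L$ by the involution that reverses the time index, observe that this conjugation turns the present setup into the one of Lemma \ref{lemma-Alberto} with $a$ replaced by $a^{-1}$, and simply transport the conclusion back.

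Concretely, first I would introduce the isometric linear isomorphism $\phi\colon Y\to X$, $(\phi x)_n=x_{-n}$, and compute $\widetilde L:=\phi L\phi^{-1}\colon X\to X$. A short index-bookkeeping argument shows that $(\widetilde L y)_n$ vanishes for $n<0$ and equals $y_{n+1}-a^{-1}y_n$ for $n\geq 0$, so that $\widetilde L$ is precisely the operator of Lemma \ref{lemma-Alberto} associated to the matrix $a^{-1}$. Since $a$ is hyperbolic, so is $a^{-1}$, and I can invoke Lemma \ref{lemma-Alberto} directly: $\widetilde L$ is surjective with
\[\ker(\widetilde L)=\{y\in X\mid y_n=a^{-n}y_0,\,n\in\mathbb N,\,y_0\in E^s(a^{-1})\}.\]

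To conclude, I would use the elementary identity $E^s(a^{-1})=E^u(a)$, which is immediate from the spectral characterisation of the stable and unstable subspaces: the eigenvalues of $a^{-1}$ are the reciprocals of those of $a$, so the spectral projections in \eqref{reP} exchange roles. Surjectivity of $L$ is then immediate from surjectivity of $\widetilde L$ and bijectivity of $\phi$, and the description of $\ker L=\phi^{-1}(\ker\widetilde L)$ is obtained by unwinding the substitution $m=-n$: the condition $y_n=a^{-n}y_0$ for $n\geq 0$ with $y_0\in E^u(a)$ translates to $x_m=a^mx_0$ for $m\leq 0$ with $x_0\in E^u(a)$, which matches the stated kernel (the case $m=0$ being tautological).

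The only mildly delicate point is the bookkeeping in conjugating by $\phi$: one must check that the backward difference becomes a forward difference and that the two vanishing regions $\{n>0\}$ and $\{n<0\}$ correspond correctly under the reflection, so that $\widetilde L$ really lands in $X$. Apart from this, no genuine new work is required, since all the analysis --- in particular the construction of a right inverse via the summable convolution kernel $f(k)=a^{k-1}(\mathbbm{1}_{\mathbb N}(k)I_N-P^u)$ and the resulting Young-inequality estimate --- has already been carried out in Lemma \ref{lemma-Alberto} and is simply inherited through the conjugation.
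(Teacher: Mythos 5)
Your proposal is correct and follows essentially the same route as the paper: the authors also conjugate $L$ by the index-reversal isomorphism $(Ix)_n=x_{-n}$ between $X$ and $Y$, observe $L=I\widetilde{L}I^{-1}$ with $\widetilde{L}$ the operator of Lemma \ref{lemma-Alberto} for $\widetilde{a}=a^{-1}$, and transport the conclusion back. Your explicit bookkeeping of the index reversal and the identity $E^s(a^{-1})=E^u(a)$ merely spells out details the paper leaves implicit.
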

\begin{proof}
Consider the following operators:
\begin{align*}
&\widetilde{L}\colon X\rightarrow X,\quad(\widetilde{L}x)_n=\begin{cases}
x_{n+1}-\widetilde{a}\,x_n,\, &n\geq0,\\
0,\, &n<0,
\end{cases}\\
&I\colon X\rightarrow Y,\quad(Ix)_n=x_{-n},
\end{align*}
where $\widetilde{a}:=a^{-1}$. Then $L=I\widetilde{L}I^{-1}$, and so the conclusion follows from Lemma \ref{lemma-Alberto}.
\end{proof}
We now introduce a family of operators $N_\lambda\colon Z\rightarrow Z$ by
\begin{align*}
(N_\lambda x)_n=\begin{cases}
0,\, &n\geq 0\\
-a(\lambda,-\infty)^{-1}x_n,\, &n<0,
\end{cases}
\end{align*}
and we denote by $S\colon Z\rightarrow Y$ the shift operator $(Sx)_n=x_{n-1}$. Note that $S$ and $N_\lambda$ are isomorphisms, and moreover it is readily seen that
\begin{align*}
(SN_\lambda L^-_\lambda x)_n=\begin{cases}
0,\, &n>0,\\
x_{n-1}-a(\lambda,-\infty)^{-1}x_n, &n\leq 0.
\end{cases}
\end{align*}
Consequently we infer from Lemma \ref{lemma-AlbertoII} that $SN_\lambda L^-_\lambda$ is surjective and
\begin{align}\label{kernelL-}
\ker(SN_\lambda L^-_\lambda)=\{x\in Y\mid\, x_n=a(\lambda,-\infty)^nx_0, n<0,\, x_0\in E^u(\lambda,-\infty)\}.
\end{align}
Clearly, this shows that $L^-_\lambda$ is surjective as well. Moreover, as $\ker(SN_\lambda L^-_\lambda)=\ker(L^-_\lambda)$, we see that $\ker(L^-_\lambda)$ is isomorphic to the finite dimensional space $E^u(\lambda,-\infty)$ showing that $L^-_\lambda$ is Fredholm.\\
Our previous discussion also yields the index bundle of the family $L^-\colon\Lambda\rightarrow\mathcal{L}(Y,Z)$. As $L^-_\lambda$ is surjective, we have as for $L^+$ above that the trivial space $\{0\}\subset Z$ is transversal to the image of $L^-$ and so
\[\ind(L^-)=[E(L^-,\{0\})]-[\Theta(\{0\})],\]
where the fibres of $E(L^-,\{0\})$ are the kernels of the operators $L^-_\lambda$ given by \eqref{kernelL-}. Hence we have a bundle isomorphism
\[E(L^-,\{0\})\rightarrow E^u(-\infty),\quad (\lambda,\{x_n\}_{n\in\mathbb{Z}})\mapsto (\lambda,x_0)\]
showing that
\begin{align}\label{indbundleL-}
\ind(L^-)=[E^u(-\infty)]-[\Theta(\{0\})].
\end{align}

\subsubsection*{Step 3: Fredholm property of $L$}
We define two bounded linear operators by
\begin{align}\label{map-I}
I\colon X\oplus Z\rightarrow c_0(\mathbb{R}^N),\quad I(x,y)=x+y
\end{align}
and
\begin{align}\label{map-J}
J\colon c_0(\mathbb{R}^N)\rightarrow X\oplus Y,\quad (Jx)_n=\begin{cases}
(x_0,x_0),\, &n=0\\
(x_n,0),\, &n>0\\
(0,x_n),\, &n<0,
\end{cases}
\end{align}
and we note that $I$ is an isomorphism and $J$ is injective. Moreover, the image of $J$ is given by $\{(x,y)\in X\oplus Y\mid \, x_0=y_0\}$, which is of codimension $N$ in $X\oplus Y$. Indeed, if we let $P\colon X\oplus Y\rightarrow\mathbb{R}^N$ be the map $P(x,y)=x_0-y_0$, then we obtain an exact sequence
\[0\rightarrow c_0(\mathbb{R}^N)\xrightarrow{J} X\oplus Y\xrightarrow{P}\mathbb{R}^N\rightarrow 0\]
showing that the cokernel of $J$ is isomorphic to $\mathbb{R}^N$. Hence $J$ is Fredholm of index $-N$.\\
Let us now consider the composition $I(L^+_\lambda\oplus L^-_\lambda)J\colon c_0(\mathbb{R}^N)\rightarrow c_0(\mathbb{R}^N)$. We find that for $x\in c_0(\mathbb{R}^N)$ and $n\in\mathbb{Z}$
\begin{align*}
\begin{split}
(I(L^+_\lambda\oplus L^-_\lambda)Jx)_n&=(L^+_\lambda Jx)_n+(L^-_\lambda Jx)_n=\begin{cases}
(L^+_\lambda x)_n,\, n\geq 0\\
(L^-_\lambda x)_n,\, n<0
\end{cases}\\
&=\begin{cases}
x_{n+1}-a(\lambda,+\infty)x_n,\, n\geq 0\\
x_{n+1}-a(\lambda,-\infty)x_n,\, n<0
\end{cases}
\end{split}
\end{align*}
which is just $(L_\lambda x)_n$. Hence,

\begin{align}\label{equL}
L_\lambda=I(L^+_\lambda\oplus L^-_\lambda)J,\quad \lambda\in\Lambda,
\end{align}
and as $I,J$ and $L^\pm_\lambda$ are Fredholm operators, we infer that $L_\lambda$ is Fredholm which shows Assertion (i) of Theorem \ref{thm-lin}.

\subsubsection*{Step 4: The index bundle}
We extend the operators $I$ and $J$ from the previous step of the proof to constant families $\mathcal{I}:\Lambda\times(X\oplus Z)\rightarrow c_0(\mathbb{R}^N)$ and $\mathcal{J}:\Lambda\times c_0(\mathbb{R}^N)\rightarrow X\oplus Y$ of Fredholm operators. Clearly, $\ind(\mathcal{I})=0$ as $I$ is an isomorphism, and $\ind(\mathcal{J})=-[\Theta(\mathbb{R}^N)]$ as we have seen in the previous step that the operator $J$ has an $n$-dimensional kernel. As

\[L=\mathcal{I}(L^+\oplus L^-)\mathcal{J}\]
by \eqref{equL}, we obtain from the properties (i) and (iv) of the index bundle, \eqref{indbundleL+} and \eqref{indbundleL-}
\begin{align*}
\ind(L)&=\ind(\mathcal{I})+\ind(L^-)+\ind(L^+)+\ind(\mathcal{J})=[E^u(-\infty)]+[E^s(+\infty)]-[\Theta(\mathbb{R}^N)]\\
&=[E^s(+\infty)]-[E^s(-\infty)],
\end{align*}
where we have used \eqref{bundlesum} in the last equality. This shows Assertion (ii) of Theorem \ref{thm-lin}.


\section{Proof of Theorem \ref{thm:nonlin}}
Let $f_n\colon \Lambda\times\mathbb{R}^N\rightarrow\mathbb{R}^N$ be a sequence of continuous maps which are continuously differentiable with respect to the $\mathbb{R}^N$ variable and which satisfies the assumptions (A1) to (A5).

\begin{lemma}\label{boundedness}
Assumptions \emph{(A2)} and \emph{(A3)} imply that
\begin{equation*}
\sup_{(n,\lambda,y)\in \Z\times \Lambda\times B(0,R)}
|(Df_n)(\lambda ,y)|<\infty
\end{equation*}
for any ball $B(0,R)$ about $0\in c_0(\R^N)$ of radius $R>0$.
\end{lemma}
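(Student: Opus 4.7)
The plan is to combine two complementary ingredients: assumption (A3) provides a uniform-in-$n$ bound for $(Df_n)(\lambda,0)$ as $\lambda$ varies in the compact space $\Lambda$, while assumption (A2) provides, uniformly in $n$, equicontinuity of the maps $(\lambda,y)\mapsto(Df_n)(\lambda,y)$ on $\Lambda\times K$ for every compact $K\subset\R^N$. A standard chain argument then upgrades the pointwise-at-zero bound to a uniform bound over $\Lambda\times K$. (Note that if $x\in B(0,R)\subset c_0(\R^N)$, then each coordinate satisfies $x_n\in B_{\R^N}(0,R)$, so it suffices to bound $(Df_n)(\lambda,y)$ for $y$ in the closed ball $K=\overline{B_{\R^N}(0,R)}$.)

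First I would establish $C_0:=\sup_{n\in\Z,\,\lambda\in\Lambda}\|(Df_n)(\lambda,0)\|<\infty$. By (A2), each $a_n(\lambda)=(Df_n)(\lambda,0)$ is continuous on $\Lambda$; by (A3), the families $a(\lambda,\pm\infty)$ are uniform limits of these continuous maps, hence continuous on the compact $\Lambda$ and therefore bounded. The uniform convergence in (A3) then bounds $\|a_n(\lambda)\|$ uniformly in $\lambda$ for all $|n|$ sufficiently large, and the remaining finitely many $a_n$ are each bounded on $\Lambda$ by continuity, giving the claim. Next, fixing $R>0$ and setting $K=\overline{B_{\R^N}(0,R)}$, I would apply (A2) with $\varepsilon=1$ to produce $\delta=\delta(R)>0$ such that
\[\sup_{n\in\Z}\|(Df_n)(\lambda,x)-(Df_n)(\mu,y)\|<1\]
whenever $(\lambda,x),(\mu,y)\in\Lambda\times K$ satisfy $d(\lambda,\mu)+\|x-y\|<\delta$. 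Finally, for any $(\lambda,y)\in\Lambda\times K$, subdivide the straight-line segment from $0$ to $y$ in $\R^N$ into $k\leq\lceil R/\delta\rceil$ pieces of length less than $\delta$, obtaining $0=z_0,z_1,\ldots,z_k=y$ with all $z_j\in K$. Telescoping and applying the bound from (A2) yields
\[\|(Df_n)(\lambda,y)\|\leq\|(Df_n)(\lambda,0)\|+\sum_{j=0}^{k-1}\|(Df_n)(\lambda,z_{j+1})-(Df_n)(\lambda,z_j)\|<C_0+k\]
uniformly in $n\in\Z$ and $\lambda\in\Lambda$, which is the desired conclusion.

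I do not foresee a real obstacle: the delicate point is only the bound at $y=0$, which is immediate once one observes that the limit matrices $a(\lambda,\pm\infty)$ are continuous on compact $\Lambda$. Everything else is the standard trick of converting uniform equicontinuity plus a single pointwise bound into a uniform bound on a compact set by chaining finitely many short-distance estimates.
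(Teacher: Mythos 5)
Your proof is correct and follows essentially the same route as the paper: a uniform bound on $(Df_n)(\lambda,0)$ obtained from (A3) (the paper simply asserts this; you spell out the compactness/uniform-convergence detail), followed by the same chaining argument along a partition of the segment from $0$ to $y$ using the uniform equicontinuity in (A2). The only cosmetic difference is that you fix $\varepsilon=1$ with $k\leq\lceil R/\delta\rceil$ steps, whereas the paper keeps $\varepsilon$ arbitrary; the substance is identical.
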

\begin{proof}
We first note that by Assumption (A3)
\begin{equation}\label{Assumption-A2}
C_0:=\sup_{(n,\lambda)\in \Z\times \Lambda} |
(Df_n)(\lambda,0)|<\infty.
\end{equation}
Now, fix $R>0$ and $\varepsilon>0$. Let $\delta>0$ be as in Assumption
(A2), and let $(n,\lambda,y)\in \Z\times \Lambda\times B(0,R)$. Then
there exists $n_0>0$ such that $n_0\leq M/\delta<n_0+1$.
Furthermore, there exist $0<k\leq n_0+1$ and points
$y_0=0,y_1,...,y_{k-1},y_k=y\in B(0,R)$ such that
$|y_i-y_{i+1}|<\delta$, for $i=0,...,k-1$. Thus
\begin{align*}
&|(Df_n)(\lambda,y)|\leq\\
&|(Df_n)(\lambda,0)|+|(Df_n)(\lambda,y_1)-(Df_n)(\lambda,0)|+
|(Df_n)(\lambda,y_2)-
(Df_n)(\lambda,y_1)|+\ldots+\\
&|(Df_n)(\lambda,y_{k-1})-
(Df_n)(\lambda,y_{k-2})|+|
(Df_n)(\lambda,y)-(Df_n)(\lambda,y_{k-1})|\\ &\leq C_0+k\varepsilon\leq C_0+(n_0+1)\varepsilon,
\end{align*}
where $C_0$ is as in \eqref{Assumption-A2}.
\end{proof}

We set for $x\in c_0(\mathbb{R}^N)$
\begin{align*}
(F_\lambda x)_n=x_{n+1}-f_n(\lambda,x_n),\quad n\in\mathbb{Z},
\end{align*}
and we note that by (A1) and the mean value theorem
\begin{align}\label{estimateF}
\begin{split}
\|(F_\lambda x)_n\|&\leq\|x_{n+1}\|+\|f_n(\lambda,x_n)-f_n(\lambda,0)\|\\
&\leq \|x_{n+1}\|+\left(\sup_{(n,s)\in\mathbb{Z}\times[0,1]}\|(Df_n)(\lambda,sx_n)\|\right)\,\|x_n\|,
\end{split}
\end{align}
which converges to $0$ as $n\rightarrow\pm\infty$ by Lemma \ref{boundedness}. Hence we have a map
\begin{align}\label{F}
F\colon \Lambda\times c_0(\mathbb{R}^N)\rightarrow c_0(\mathbb{R}^N),
\end{align}
and in the next section we investigate its continuity and differentiability.


\subsection{Analytic Properties of $F$}
The aim of this section is to prove that the map $F$ in \eqref{F} is continuous, differentiable in the second variable and $(DF_\lambda)(x)\in\mathcal{L}(c_0(\mathbb{R}^N))$ depends continuously on $(\lambda,x)\in\Lambda\times c_0(\mathbb{R}^N)$. Our argument mainly follows \cite[Lemma 2.3]{Poetzsche} and \cite[Lemma 6.1]{JacoboRobertI}.\\
We note at first that given $x\in c_0(\mathbb{R}^N)$ and $\lambda\in\Lambda$, there is a closed ball $\overline{B}$ of finite radius about $0\in\mathbb{R}^N$ such that $x_n$ is in the interior of $\overline{B}$ for all $n\in\mathbb{Z}$. Hence by (A2), for every $\varepsilon>0$ there is $\delta>0$ such that $y_n\in B$ if $\|x_n-y_n\|<\delta$, and
\begin{align}\label{Fcont}
\|F(\lambda,x)-F(\mu,y)\|=\sup_{n\in\mathbb{Z}}\|f_n(\lambda,x_n)-f_n(\mu,y_n)\|<\varepsilon
\end{align}
if $d(\lambda,\mu)+\sup_{n\in\mathbb{Z}}\|x_n-y_n\|<\delta$. Hence $F$ is continuous at any point $(\lambda,x)\in \Lambda\times c_0(\mathbb{R}^N)$.\\
We now introduce a map $T:\Lambda\times c_0(\mathbb{R}^N)\rightarrow\mathcal{L}(c_0(\mathbb{R}^N))$ by
\[((T_\lambda x)y)_n=y_{n+1}-(Df_n)(\lambda,x_n)y_n,\quad n\in\mathbb{Z},\]
and note that $T$ is well defined by Lemma \ref{boundedness}. Moreover, its continuity follows by (A2) as in \eqref{Fcont}. Our aim is to show that $(DF_\lambda)(x)=T_\lambda(x)$ for every fixed $\lambda\in\Lambda$ and $x\in c_0(\mathbb{R}^N)$. As $T$ is continuous, this shows that $F$ is differentiable in the second variable and $(DF_\lambda)(x)$ depends continuously on $(\lambda,x)$.\\
We obtain for $h\in c_0(\mathbb{R}^N)$ and $\lambda\in\Lambda$
\begin{align*}
r_\lambda(x,h)&=\|F_\lambda(x+h)-F_\lambda(x)-T_\lambda(x)h\|=\sup_{n\in\mathbb{Z}}\|f_n(\lambda,x_n+h_n)-f_n(\lambda,x_n)-(Df_n)(\lambda,x_n)h_n\|\\
&=\sup_{n\in\mathbb{Z}}\left\|\int^1_0{(Df_n)(\lambda,x_n+sh_n)h_n\,ds-(Df_n)(\lambda,x_n)h_n}\right\|\\
&\leq\sup_{n\in\mathbb{Z}}\left(\int^1_0{\left\|(Df_n)(\lambda,x_n+sh_n)-(Df_n)(\lambda,x_n)\right\|ds}\right)\,\sup_{n\in\mathbb{Z}}\|h_n\|\\
&\leq \|h\|\int^1_0{\sup_{n\in\mathbb{Z}}\left\|(Df_n)(\lambda,x_n+sh_n)-(Df_n)(\lambda,x_n)\right\|ds}\\
&\leq\|h\|\sup_{s\in[0,1]}\sup_{n\in\mathbb{Z}}\left\|(Df_n)(\lambda,x_n+sh_n)-(Df_n)(\lambda,x_n)\right\|.
\end{align*}
Now it follows from (A2) that
\[0\leq\frac{r_\lambda(x,h)}{\|h\|}\leq\sup_{s\in[0,1]}\sup_{n\in\mathbb{Z}}\left\|(Df_n)(\lambda,x_n+sh_n)-(Df_n)(\lambda,x_n)\right\|\rightarrow 0 \]
as $h\rightarrow 0$ in $c_0(\mathbb{R}^N)$, and so indeed $(DF_\lambda)(x)=T_\lambda(x)$, $x\in c_0(\mathbb{R}^N)$.

\subsection{Finite dimensional reduction}\label{proof-indbund}
We note at first that by the results of the previous section
\[((DF_\lambda)(0)x)_n=x_{n+1}-a_n(\lambda)x_n,\quad n\in\mathbb{Z},\]
where $a_n(\lambda)=(Df_n)(\lambda,0)$ as introduced in Section \ref{section-mainthm}. By (A3) and Theorem \ref{thm-lin}, these operators are Fredholm. Moreover, as $\dim E^s(\lambda,+\infty)=\dim E^s(\lambda,-\infty)$ by (A4), we see from Remark \ref{rem:dim} that the Fredholm index of $(DF_\lambda)(0)$ vanishes, i.e. $(DF_\lambda)(0)\in\Phi_0(c_0(\mathbb{R}^n))$ for all $\lambda\in\Lambda$.\\
In what follows, we set $L_\lambda:=(DF_\lambda)(0)$ and so $L=\{L_\lambda\}$ is a family in $\Phi_0(c_0(\mathbb{R}^N))$. Let now $V\subset c_0(\mathbb{R}^N)$ be a subspace which is transversal to the image of $L$ as in \eqref{subspace}, i.e.
\begin{align}\label{proof-transveralL}
\im(L_\lambda)+V=c_0(\mathbb{R}^N),\quad\lambda\in\Lambda,
\end{align}
and so

\[\ind(L)=[E(L,V)]-[\Theta(V)]\in\widetilde{KO}(\Lambda).\]
From now on we simplify our notation by setting $E:=E(L,V)$.\\
Let $P_V$ be a projection onto the finite dimensional space $V$ and set $W=\ker(P_V)$ as well as $P_W=(I_{c_0(\mathbb{R}^N)}-P_V)$. If we use that $c_0(\mathbb{R}^N)=V\oplus W$, then we can write $F$ as $F=(F^1,F^2)$, where
\[F^1=P_VF\colon \Lambda\times c_0(\mathbb{R}^N)\rightarrow V,\quad F^2=P_WF\colon \Lambda\times c_0(\mathbb{R}^N)\rightarrow W.\]
As $E$ is a finite dimensional subbundle of $\Theta(c_0(\mathbb{R}^N))$, there is a family $P\colon\Lambda\rightarrow\mathcal{L}(c_0(\mathbb{R}^N))$ of projections such that $\im(P_\lambda)=E_\lambda$ for all $\lambda\in\Lambda$ (cf. e.g. \cite{indbundleIch}). Considering $P$ as a bundle morphism between $\Theta(c_0(\mathbb{R}^N))$ and $E$, we can define a fibre preserving map
\[\phi\colon \Theta(c_0(\mathbb{R}^N))\rightarrow\Theta(W)\oplus E,\quad \phi(\lambda,x)=(\lambda,F^2(\lambda,x),P_\lambda x)\]
which maps the zero section $\Lambda\times\{0\}$ of $\Theta(c_0(\mathbb{R}^N))$ to the zero section of $\Theta(W)\oplus E$. The main step in our finite dimensional reduction is the following technical lemma.

\begin{lemma}\label{lemma-reduction}
There are open neighbourhoods $\Omega_1\subset\Theta(c_0(\mathbb{R}^N))$ and $\Omega_2\subset\Theta(W)\oplus E$ of the zero sections such that $\phi:\Omega_1\rightarrow\Omega_2$ is a homeomorphism.
\end{lemma}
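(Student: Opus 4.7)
The plan is to apply a parametrised inverse function theorem to the fibrewise map $\phi_\lambda:=\phi(\lambda,\cdot)\colon c_0(\mathbb{R}^N)\to W\oplus E_\lambda$, using compactness of $\Lambda$ to turn pointwise local inverses into a uniform neighbourhood of the zero section.

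First I compute the fibrewise derivative of $\phi$ at $0$. Since $F(\lambda,0)=0$ with $(DF_\lambda)(0)=L_\lambda$ by the results of the previous subsection, and since the bundle projection $P_\lambda$ is linear in $x$, we obtain
\begin{equation*}
D_x\phi(\lambda,0)\,x=\bigl(P_W L_\lambda x,\;P_\lambda x\bigr)\in W\oplus E_\lambda,\qquad x\in c_0(\mathbb{R}^N).
\end{equation*}
I claim $D_x\phi(\lambda,0)$ is a Banach space isomorphism for every $\lambda\in\Lambda$. For injectivity, if $P_W L_\lambda x=0$ and $P_\lambda x=0$, then $x\in\ker(P_WL_\lambda)=\ker((I-P_V)L_\lambda)=E_\lambda$ by the very definition of $E=E(L,V)$, and then $x\in E_\lambda\cap\ker P_\lambda=\{0\}$ because $P_\lambda$ is a projection onto $E_\lambda$. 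For surjectivity, given $(w,e)\in W\oplus E_\lambda$, the transversality \eqref{proof-transveralL} together with $V=\ker P_W$ implies that $P_WL_\lambda\colon c_0(\mathbb{R}^N)\to W$ is surjective; choose $x_1$ with $P_WL_\lambda x_1=w$ and set $x=(I-P_\lambda)x_1+e$, which then satisfies $P_WL_\lambda x=w$ and $P_\lambda x=e$. By the open mapping theorem, $D_x\phi(\lambda,0)$ is a topological isomorphism.

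Second, I apply a parametric inverse function theorem to $\phi$. By the analytic results of the previous subsection, the partial derivative $D_x\phi(\lambda,x)\in\mathcal{L}(c_0(\mathbb{R}^N),W\oplus E_\lambda)$ depends continuously on $(\lambda,x)$, and $\phi(\lambda,0)=(\lambda,0,0)$. Fix $\lambda_0\in\Lambda$. Because $D_x\phi(\lambda_0,0)$ is an isomorphism and inversion is continuous on the open set $GL$ of invertible operators, there is a neighbourhood $U_{\lambda_0}$ of $\lambda_0$ in $\Lambda$ and a radius $r_{\lambda_0}>0$ such that on $U_{\lambda_0}\times B(0,r_{\lambda_0})$ the derivative $D_x\phi$ stays in $GL$ with uniformly bounded inverse. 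A standard contraction-mapping argument (e.g.\ Lang, \emph{Real and Functional Analysis}, Ch.~XIV) then yields $s_{\lambda_0}>0$ such that for every $\lambda\in U_{\lambda_0}$, the restriction $\phi_\lambda\colon B(0,r_{\lambda_0})\to W\oplus E_\lambda$ is a homeomorphism onto an open set containing the ball of radius $s_{\lambda_0}$ around $0$, with inverse depending continuously on the parameter.

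Third, I globalise by compactness. Cover $\Lambda$ by finitely many such neighbourhoods $U_{\lambda_1},\ldots,U_{\lambda_m}$, set $r:=\min_i r_{\lambda_i}$, and let
\begin{equation*}
\Omega_1:=\{(\lambda,x)\in\Theta(c_0(\mathbb{R}^N))\mid\|x\|<r\},\qquad \Omega_2:=\phi(\Omega_1).
\end{equation*}
The fibrewise inverses patch into a continuous global map $\Omega_2\to\Omega_1$, since on each overlap $U_{\lambda_i}\cap U_{\lambda_j}$ uniqueness of the local inverse forces agreement. Openness of $\Omega_2$ follows from fibrewise openness combined with the continuous dependence of $\phi_\lambda^{-1}$ on $\lambda$, or alternatively by shrinking to $\Omega_2':=\{(\lambda,w,e)\mid\|(w,e)\|<s\}$ with $s:=\min_i s_{\lambda_i}$ and redefining $\Omega_1:=\phi^{-1}(\Omega_2')\cap(\Lambda\times B(0,r))$. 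This gives the desired fibre-preserving homeomorphism.

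The main obstacle is the uniformity in $\lambda$: the naive fibrewise inverse function theorem only delivers a radius of invertibility depending on $\lambda$, and one has to invoke compactness of $\Lambda$ together with joint continuity of $(\lambda,x)\mapsto D_x\phi(\lambda,x)$ to extract a positive radius that works over all of $\Lambda$ simultaneously.
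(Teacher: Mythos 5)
Your proposal is correct and follows essentially the same route as the paper: show that the fibrewise derivative $D_x\phi(\lambda,0)=(P_WL_\lambda,P_\lambda)$ is an isomorphism, run a parametrised contraction-mapping (inverse function) argument giving local inverses that depend continuously on the parameter, and then patch these over a finite cover of the compact space $\Lambda$, using fibre-preservation for global injectivity. The only point the paper makes explicit that you elide is a local trivialisation of the bundle $E$ near each $\lambda_0$ (the isomorphism $\rho$ onto $\Theta(W)\oplus\Theta(V)\cong\Theta(c_0(\mathbb{R}^N))$), which is what makes statements such as ``$D_x\phi(\lambda,x)$ stays in $GL$ with uniformly bounded, continuously varying inverse'' meaningful despite the $\lambda$-dependent codomain $W\oplus E_\lambda$.
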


\begin{proof}
We split the proof into three parts.


\subsubsection*{Step 1: From $\phi$ to $\widetilde{\phi}$}
Let us note at first that each $\phi_\lambda:=\phi(\lambda,\cdot):c_0(\mathbb{R}^N)\rightarrow W\oplus E_\lambda$ is differentiable and

\[(D\phi_\lambda)(0)=((DF^2_\lambda)(0),P_\lambda)\in\mathcal{L}(c_0(\mathbb{R}^N),W\oplus E_\lambda).\]
As the map $(DF^2_\lambda)(0)=P_WDF_\lambda(0)=P_WL_\lambda$ is surjective by \eqref{proof-transveralL}, $P_\lambda$ is surjective as a projection and $\ker((DF^2_\lambda)(0))=\im(P_\lambda)$, we see that $(D\phi_\lambda)(0)$ is an isomorphism.\\
If now $(\lambda_0,0)\in\Theta(c_0(\mathbb{R}^N))$ is given, then there is a neighbourhood $U$ of $\lambda_0$ and a trivialisation $\tau:E\mid_U\rightarrow U\times E_{\lambda_0}$. As $E_{\lambda_0}=E(L,V)_{\lambda_0}$ and $\dim E(L,V)_{\lambda_0}=\ind L_{\lambda_0}+\dim V=\dim V$ by \eqref{dimindbund}, we obtain a bundle isomorphism
\[\rho\colon \Theta(W)\oplus E\rightarrow \Theta(W)\oplus\Theta(V)\cong\Theta(c_0(\mathbb{R}^N))\]
over $U$, which is the identity on $\Theta(W)$. We now compose the map $\rho\circ\phi$ on the right with $(D\phi_{\lambda_0}(0))^{-1}\rho^{-1}_{\lambda_0}$, and obtain a a fibre preserving map $\widetilde{\phi}$ between neighbourhoods in $\Theta(c_0(\mathbb{R}^N))$ of $(\lambda_0,0)$ such that
\begin{align}\label{diffId}
(D\widetilde{\phi}_{\lambda_0})(0)=I_{c_0(\mathbb{R}^N)}.
\end{align}


\subsubsection*{Step 2: $\widetilde{\phi}$ is a local homeomorphism}
Here and subsequently, we need the following folklore result which we recall for the reader's convenience.
\begin{lemma}\label{FPunique}
Let $X$ be a Banach space and $f:\Lambda\times X\rightarrow X$ a continuous map such that
\[\|f_\lambda(x)-f_\lambda(y)\|<q\|x-y\|,\quad x,y\in X,\, \lambda\in\Lambda,\]
for some $q\in(0,1)$. Then the map
\[\Lambda\rightarrow X, \,\lambda\mapsto x_\lambda\]
which assigns to $\lambda\in\Lambda$ the unique fixed-point $x_\lambda\in X$ of $f_\lambda$ is continuous.
\end{lemma}
\begin{proof}
For a given $\lambda_0\in\Lambda$, it is readily seen from the contraction inequality that
\begin{align*}
\|f^{(k)}_\lambda(x_{\lambda_0})-x_{\lambda_0}\|\leq\left(\sum^{k-1}_{j=0}{q^j}\right)\|f_\lambda(x_{\lambda_0})-x_{\lambda_0}\|,
\end{align*}
where $f^{(k)}_\lambda(x_{\lambda_0})$ means the $k$-fold application of $f_\lambda$ to $x_{\lambda_0}$. As $f^{(k)}_\lambda(x_{\lambda_0})\rightarrow x_\lambda$, $k\rightarrow\infty$, and $f_{\lambda_0}(x_{\lambda_0})=x_{\lambda_0}$, we obtain
\[\|x_\lambda-x_{\lambda_0}\|\leq\frac{1}{1-q} \|f_\lambda(x_{\lambda_0})-f_{\lambda_0}(x_{\lambda_0})\|\]
showing that the fixed-points depend continuously on the parameter.
\end{proof}
Now we proceed with the construction of the neighbourhoods $\Omega_1$ and $\Omega_2$. By \eqref{diffId} there is a neighbourhood $\widetilde{U}\subset \lambda$ of $\lambda_0$ and $\delta>0$ such that
\begin{align}\label{fixedpoint}
\|I_{c_0(\mathbb{R}^{N})}-(D\widetilde{\phi}_\lambda)(x)\|\leq\frac{1}{2},\quad x\in B(0,\delta),\, \lambda\in\widetilde{U}.
\end{align}
We now define for $\lambda\in\widetilde{U}$ and $y\in B(0,\frac{1}{2}\delta)$ a map

\[c_{(\lambda,y)}:c_0(\mathbb{R}^N)\rightarrow c_0(\mathbb{R}^N),\qquad c_{(\lambda,y)}(x)=x-\widetilde{\phi}_\lambda(x)+y.\]
It follows from \eqref{fixedpoint}, $\widetilde{\phi}_\lambda(0)=0$, and the mean value theorem that for all $x$ in the closed ball $ \overline{B}(0,\delta)$
\[\|c_{(\lambda,y)}(x)\|\leq\|x-\widetilde{\phi}_\lambda(x)\|+\|y\|\leq\frac{1}{2}\|x\|+\|y\|<\delta\]
and so $c_{(\lambda,y)}$ maps $\overline{B}(0,\delta)$ into itself. Moreover,
\[\|c_{(\lambda,y)}(x)-c_{(\lambda,y)}(\overline{x})\|\leq \frac{1}{2}\|x-\overline{x}\|,\quad x,\overline{x}\in\overline{B}(0,\delta),\]
and consequently $c_{(\lambda,y)}$ has for any ${(\lambda,y)}\in\widetilde{U}\times B(0,\frac{1}{2}\delta)$ a unique fixed point $x_{(\lambda,y)}$ which depends continuously on ${(\lambda,y)}$ by Lemma \ref{FPunique}. Hence $\widetilde{\phi}_\lambda\colon B(0,\delta)\rightarrow B(0,\frac{1}{2}\delta)$, $\lambda\in\widetilde{U}$, is a homeomorphism, where $\widetilde{\phi}^{-1}_\lambda(y)=x_{(\lambda,y)}$. As the fixed-point $x_{(\lambda,y)}$ depends continuously on $(\lambda,y)\in \widetilde{U}\times B(0,\delta)$ we see that also the fibre-preserving map
\[\widetilde{\phi}^{-1}\colon\widetilde{U}\times B(0,\frac{1}{2}\delta)\rightarrow\widetilde{U}\times B(0,\delta),\quad\widetilde{\phi}^{-1}(\lambda,y)=(\lambda,x_{(\lambda,y)})\]
is continuous.

\subsubsection*{Step 3: $\phi$ is a homeomorphism}
As $\widetilde{\phi}$ and $\phi$ only differ by bundle isomorphisms, we conclude that every point $(\lambda_0,0)\in\Lambda\times c_0(\mathbb{R}^N)$ has a neighbourhood in $\Theta(c_0(\mathbb{R}^N))$ which is mapped by $\phi$ homeomorphically onto a neighbourhood of $(\lambda_0,0)$ in $\Theta(W)\oplus E$.\\
If we now cover $\Lambda\times\{0\}$ by such neighbourhoods and call their unions $\Omega_1$ and $\Omega_2$, respectively, then $\phi$ is a local homeomorphism on the obtained set $\mathcal{O}_1$. Moreover, as $\phi$ is fibre-preserving, it is injective on $\mathcal{O}_1$ and so it is a homeomorphism onto $\Omega_2=\phi(\Omega_1)$.
\end{proof}
\noindent
We now assume:

\begin{enumerate}
\item[(A)] There is no bifurcation point of the map $F:\Lambda\times c_0(\mathbb{R}^N)\rightarrow c_0(\mathbb{R}^N)$.
\end{enumerate}
By Lemma \ref{lemma-reduction} there are open neighbourhoods $\Omega_1$ and $\Omega_2$ of $\Lambda\times\{0\}$ in $\Theta(c_0(\mathbb{R}^N))$ and $\Theta(W)\oplus E$, respectively, such that $\phi:\Omega_1\rightarrow\Omega_2$ is a fibre preserving homeomorphism. Moreover, as $F$ has no bifurcation point, we can assume that

\begin{align}\label{Fneq0}
F(\lambda,x)\neq 0,\quad (\lambda,x)\in\Omega_1,\,\, x\neq 0.
\end{align}
The intersection $E\cap\Omega_2$ is an open neighbourhood of the zero section of $E$ and

\[\psi\colon E\cap\Omega_2\rightarrow\Theta(c_0(\mathbb{R}^N)),\quad \psi(v)=\phi^{-1}(0,v)\]
is a homeomorphism onto $\psi(E\cap \Omega_1)\subset\Omega_2$. Moreover, it follows from the definition of $\phi$ that

\begin{align}\label{imH}
F^2_\lambda((\psi_\lambda(v)))=F^2_\lambda(\varphi^{-1}_\lambda(0,v))=0\quad \text{for}\,\, v\in (E\cap\Omega_2)_\lambda.
\end{align}
Let us now consider again the map $F^1=P_VF\colon \Lambda\times c_0(\mathbb{R}^N)\rightarrow V$. As $F^1_\lambda(\psi_\lambda(v))=0$ by \eqref{imH}, and $F_\lambda(\psi_\lambda(v))\neq 0$ by \eqref{Fneq0}, we see that

\begin{align}\label{Gneq0}
G_\lambda(\psi_\lambda(v))\neq 0\quad\text{for all}\quad v\in (E\cap\Omega_2)_\lambda,\quad v\neq 0\in E_\lambda.
\end{align}

\subsection{Sphere bundles and Dold's Theorem}
Let us recall that we assume in Theorem \ref{thm:nonlin} the existence of some $\lambda_0\in\Lambda$ such that $L_{\lambda_0}=(DF_{\lambda_0})(0)$ is invertible. Hence, by the inverse function theorem, there is a neighbourhood $\mathcal{O}$ of $0$ in $c_0(\mathbb{R}^N)$ on which $F_{\lambda_0}$ is a diffeomorphism onto its image. Using the compactness of $\Lambda$, we now let $r>0$ be such that

\begin{enumerate}
	\item[(i)] the closed disc bundle $D(E,r)$ in $E$ is contained in $E\cap\Omega_2$,
	\item[(ii)] $\psi_{\lambda_0}(D(E_{\lambda_0},r))\subset\mathcal{O}$.
\end{enumerate}
In what follows, we denote by $S(E,r)$ the associated sphere bundle to $D(E,r)$ in $E$.\\
We now let $S^{n-1}$ be the unit sphere in the $n$-dimensional space $V$, and we obtain a fibre bundle map

\[\Gamma\colon S(E,r)\rightarrow\Lambda\times S^{n-1},\quad \Gamma(v)=(\pi(v),\|F^1(\psi(v))\|^{-1}\, F^1(\psi(v))),\]
where $\pi\colon E\rightarrow\Lambda$ denotes the bundle projection. It follows from \eqref{Gneq0}, (i) and (ii) that the restriction of $F^1$ to $\psi_{\lambda_0}(D(E_{\lambda_0},r))$ is a diffeomorphism onto its image, which shows that $\Gamma_{\lambda_0}\colon S(E,r)_{\lambda_0}\rightarrow S^{n-1}$ is a homotopy equivalence.\\
Let us now recall the following classical theorem that was proved by Albrecht Dold in \cite{Dold} (cf. \cite{Crabb}):
\begin{theorem}
Let $f\colon\zeta\rightarrow\eta$ be a fibre preserving map between two fibre bundles over the connected compact CW-complex $\Lambda$. Then $f$ is a fibrewise homotopy equivalence if and only if $f_{\lambda_0}\colon\zeta_{\lambda_0}\rightarrow\eta_{\lambda_0}$ is a homotopy equivalence for some $\lambda_0\in\Lambda$.
\end{theorem}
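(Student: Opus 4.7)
The forward implication is immediate: restricting a global fibrewise homotopy inverse of $f$, together with the accompanying fibrewise homotopies, to the fibre over any $\lambda_0$ yields a homotopy inverse of $f_{\lambda_0}$. The content lies in the converse, which I would establish in three stages.

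First, I would use the path-connectedness of $\Lambda$ (automatic for connected CW complexes) to propagate the hypothesis to every fibre. Any path $\gamma$ from $\lambda_0$ to another point $\lambda$ induces, via the homotopy lifting property enjoyed by the fibre bundles $\zeta$ and $\eta$ (Steenrod's theorem for bundles over CW bases), parallel transport homotopy equivalences $\zeta_{\lambda_0}\simeq\zeta_\lambda$ and $\eta_{\lambda_0}\simeq\eta_\lambda$; since $f$ is fibre-preserving, these parallel transports intertwine $f_{\lambda_0}$ and $f_\lambda$ up to homotopy. Consequently $f_\lambda$ is a homotopy equivalence for every $\lambda\in\Lambda$.

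Next, I would reduce to a statement about fibrewise retractions via the fibrewise mapping cylinder. The map $f$ factors as a fibrewise cofibration $\zeta\hookrightarrow M_f$ followed by a fibrewise strong deformation retraction $M_f\to\eta$. It therefore suffices to prove the following: a fibrewise cofibration $i\colon\zeta\hookrightarrow\eta$ whose restriction to every fibre is a homotopy equivalence admits a fibrewise strong deformation retraction $\eta\to\zeta$.

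Finally, I would construct this retraction and the accompanying fibrewise homotopy by induction over the skeleta $\Lambda^{(k)}$ of the CW complex $\Lambda$. To pass from $\Lambda^{(k-1)}$ to $\Lambda^{(k)}$, restrict attention to a single closed $k$-cell $\overline{e}\cong D^k$, over which both bundles are trivial by local triviality and contractibility; one then has to extend partial data already defined over $\partial e$ to the whole cell, using the homotopy extension property of the pair $(\overline{e},\partial e)$ together with the fact that over a contractible base the required retraction exists from the fibrewise hypothesis. The main obstacle is precisely this inductive extension step: one must extend \emph{simultaneously} the candidate fibrewise homotopy inverse and the two homotopies witnessing $gf\simeq\id_\zeta$ and $fg\simeq\id_\eta$, without breaking compatibility along cell boundaries. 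The standard resolution is to interpret the data as a section of a fibration over $\Lambda$ whose fibres are spaces of homotopy inverses of the fibrewise map; these fibres are non-empty and weakly contractible because each $f_\lambda$ is a homotopy equivalence, so the obstructions to extending sections across cells vanish and the induction goes through.
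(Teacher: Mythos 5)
Note first that the paper does not prove this statement at all: it is recalled as a classical theorem of Dold, with references to Dold's paper and to Crabb--James, so there is no in-paper proof to compare with; your proposal must stand on its own. Your forward implication and your first stage are fine: pulling the bundles back along a path and trivialising shows that the square relating $f_{\lambda_0}$ and $f_\lambda$ commutes up to homotopy, so the single-fibre hypothesis propagates to every fibre over the connected base.

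The genuine gap is in your last stage. You justify the inductive extension over cells by asserting that the fibre of your auxiliary fibration, the space of triples $(g,H,K)$ with $H\colon g f_\lambda\simeq \id$ and $K\colon f_\lambda g\simeq \id$, is weakly contractible. This is false in general: that space is the homotopy fibre over $(\id,\id)$ of the map $\mathrm{Map}(\eta_\lambda,\zeta_\lambda)\rightarrow \mathrm{Map}(\zeta_\lambda,\zeta_\lambda)\times\mathrm{Map}(\eta_\lambda,\eta_\lambda)$, $g\mapsto (gf_\lambda,f_\lambda g)$, and already for $f_\lambda=\id_{S^1}$ it is $\Omega_{\id}\mathrm{Map}(S^1,S^1)\simeq\mathbb{Z}$, which is not contractible. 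Hence the obstructions to extending $g$, $H$ and $K$ \emph{simultaneously} need not vanish, and the induction as described does not close. The standard repair is to extend only one-sided inverse data: the space of pairs $(g,K)$ with $K\colon f_\lambda g\simeq\id_{\eta_\lambda}$ is the homotopy fibre over $\id$ of post-composition $(f_\lambda)_\ast\colon\mathrm{Map}(\eta_\lambda,\zeta_\lambda)\rightarrow\mathrm{Map}(\eta_\lambda,\eta_\lambda)$, which is a homotopy equivalence, so this fibre \emph{is} weakly contractible and your skeletal induction produces a fibrewise $g$ with $fg\simeq_\Lambda\id_\eta$. Since each $g_\lambda$ is then itself a homotopy equivalence, the same argument applied to $g$ gives $h$ with $gh\simeq_\Lambda\id_\zeta$, and the formal computation $h\simeq_\Lambda f(gh)=(fg)h\simeq_\Lambda f$ yields $gf\simeq_\Lambda\id_\zeta$, making $g$ a two-sided fibrewise inverse. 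Two smaller points: the mapping-cylinder reduction is a detour and slightly problematic, since $M_f$ is no longer a fibre bundle, so the local triviality invoked in the cell-by-cell step does not apply to it verbatim; and making the ``fibration of fibrewise inverse data'' precise is exactly where Dold instead patches local fibre homotopy inverses over a numerable cover via a partition of unity.
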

\noindent
As $\Gamma_{\lambda_0}$ is a homotopy equivalence, we see by Dold's Theorem that $\Gamma\colon S(E,r)\rightarrow S^{n-1}$ is a fibrewise homotopy equivalence. Hence we obtain from the definition of the $J$-homomorphism that
\[J(\ind L)=J(E)=0\in J(\Lambda).\]
However, by Theorem \ref{thm-lin}, $\ind(L)=[E^s(+\infty)]-[E^s(-\infty)]$ showing that
\[J(E^s(+\infty))=J(E^s(-\infty))\in J(\Lambda),\]
which contradicts our assumption of Theorem \ref{thm:nonlin}. Consequently, our assumption (A) that $F$ does not have a bifurcation point is wrong and so Theorem \ref{thm:nonlin} is proved.


\section{Proof of Theorem \ref{thm:nonlinII}}
As in the assertion of Theorem \ref{thm:nonlinII}, we denote by $B$ the set of all bifurcation points of \eqref{nonlin}. In what follows, we use without further reference the fact that $\dim(B)\geq k$ if $\check{H}^k(B;G)\neq 0$ for some abelian coefficient group $G$ (cf. \cite[VIII.4.A]{Hurewicz}), where $\check{H}^k(B;G)$ denotes the $k$-th \v{C}ech cohomology group. We now divide the proof into two steps depending on whether or not $\Lambda\setminus B$ is connected.
\subsubsection*{Step 1: Proof of Theorem \ref{thm:nonlinII} if $\Lambda\setminus B$ is not connected}
If $\Lambda\setminus B$ is not connected, then the reduced singular homology group $\tilde{H}_0(\Lambda \setminus B;\mathbb{Z}_2)$ is non-trivial. As $\Lambda$ is connected, the long exact sequence of reduced homology (cf. \cite[\S IV.6]{Bredon})
\begin{align*}
\ldots\rightarrow H_1(\Lambda,\Lambda\setminus B;\mathbb{Z}_2)\rightarrow\widetilde{H}_0(\Lambda\setminus B;\mathbb{Z}_2)\rightarrow\widetilde{H}_0(\Lambda;\mathbb{Z}_2)=0,
\end{align*}
shows that there is a surjective map
\[H_1(\Lambda,\Lambda\setminus B;\mathbb{Z}_2)\rightarrow\widetilde{H}_0(\Lambda\setminus B;\mathbb{Z}_2),\]
and so we see that $H_1(\Lambda,\Lambda\setminus B;\mathbb{Z}_2)$ is non-trivial.\\
It is an immediate consequence of Definition \ref{def:bif} that the set $B$ is closed. Hence we can apply Poincar\'e-Lefschetz duality (cf. \cite[Cor. VI.8.4]{Bredon}) to obtain an isomorphism
\begin{align*}
H_1(\Lambda,\Lambda\setminus B;\mathbb{Z}_2)\xrightarrow{\cong} \check{H}^{k-1}(B;\mathbb{Z}_2),
\end{align*}
which implies that $\check{H}^{k-1}(B;\mathbb{Z}_2)\neq 0$. Consequently, as $k\geq 2$, $B$ is not contractible to a point and, moreover, we obtain $\dim B\geq k-1$ which is greater or equal to $k-i$.


\subsubsection*{Step 2: Proof of Theorem \ref{thm:nonlinII} if $\Lambda\setminus B$ is connected}
We denote by
\begin{align*}
\langle\cdot,\cdot\rangle:H^{i}(\Lambda;\mathbb{Z}_2)\times H_{i}(\Lambda;\mathbb{Z}_2)\rightarrow\mathbb{Z}_2
\end{align*}
the duality pairing which is non-degenerate as $\mathbb{Z}_2$ is a field. As $w_i(E^s(+\infty))\neq w_i(E^s(-\infty))$ by assumption, there is some $\alpha\in H_{i}(\Lambda;\mathbb{Z}_2)$ such that
\begin{align}\label{alpha}
\langle w_i(E^s(+\infty)),\alpha\rangle\neq\langle w_i(E^s(-\infty)),\alpha\rangle.
\end{align}
Let now $\eta\in\check{H}^{k-i}(\Lambda;\mathbb{Z}_2)$ be the Poincar\'e dual of $\alpha$ with respect to a fixed $\mathbb{Z}_2$-orientation of $\Lambda$. According to \cite[Cor. VI.8.4]{Bredon}, there is a commutative diagram
\begin{align*}
\xymatrix{&\check{H}^{k-i}(\Lambda;\mathbb{Z}_2)\ar[r]^{\iota^\ast}&\check{H}^{k-i}(B;\mathbb{Z}_2)\\
H_{i}(\Lambda\setminus B;\mathbb{Z}_2)\ar[r]^{j_\ast}&H_{i}(\Lambda;\mathbb{Z}_2)\ar[u]\ar[r]^(.4){\pi_\ast}&H_{i}(\Lambda,\Lambda\setminus B;\mathbb{Z}_2)\ar[u]
}
\end{align*}
where the lower horizontal sequence is part of the long exact homology sequence of the pair $(\Lambda,\Lambda\setminus B)$ and the vertical arrows are isomorphisms given by Poincar\'e-Lefschetz duality. By commutativity, the class $\iota^\ast\eta$ is dual to $\pi_\ast\alpha$, and we now assume by contradiction that $\pi_\ast\alpha$ is trivial.\\
By exactness of the lower horizontal sequence, there is $\beta\in H_{i}(\Lambda\setminus B;\mathbb{Z}_2)$ such that $\alpha=j_\ast\beta$. Moreover, as homology is compactly supported (cf. \cite[Sect. 20.4]{May}), there is a compact connected CW-complex $P$ and a map $g:P\rightarrow\Lambda\setminus B$ such that $\beta=g_\ast\gamma$ for some $\gamma\in H_{i}(P;\mathbb{Z}_2)$.\\
By (A5) there is some $\lambda_0\in\Lambda$ such that the difference equation \eqref{equlin} has only the trivial solution in $c_0(\mathbb{R}^N)$. We now consider again the map $F\colon\Lambda\times c_0(\mathbb{R}^N)\rightarrow c_0(\mathbb{R}^N)$, which we introduced in \eqref{F}, and recall that $L_\lambda=(DF_\lambda)(0)$ is Fredholm of index $0$ and its kernel is given by all solutions of \eqref{equlin} in $c_0(\mathbb{R}^N)$. We see that $L_{\lambda_0}$ is invertible, and so we obtain from the implicit function theorem that the only solutions of $F(\lambda,x)=0$ in a neighbourhood of $(\lambda_0,0)$ are of the form $(\lambda,0)$. Consequently, $\lambda_0\notin B$.\\
As $\Lambda\setminus B$ is connected, there is a path joining $\lambda_0$ to $g(p_0)$ for a $0$-cell $p_0$ of $P$. After attaching a $1$-cell to $p_0$, we can deform $g$ such that $\lambda_0$ belongs to its image. This does not affect the property that $\beta=g_\ast\gamma$ and so we can assume without loss of generality that $\lambda_0\in\im(g)$.\\
We now set $\overline{g}=j\circ g:P\rightarrow\Lambda$ and consider the family of discrete dynamical systems
\begin{align}\label{fP}
x_{n+1}=\overline{f}_n(p,x_n),\quad n\in\mathbb{Z},
\end{align}
for $x\in c_0(\mathbb{R}^N)$ which is parametrised by $P$, where $\overline{f}:P\times\mathbb{R}^N\rightarrow\mathbb{R}^N$ is defined by $\overline{f}_n(p,u)=f_n(\overline{g}(p),u)$. Clearly, $\overline{a}_n(p):=(D\overline{f}_n)(p,0)=a_n(\overline{g}(p))$, $n\in\mathbb{Z}$, and as the stable subspaces $\overline{E}^s(p,\pm\infty)$ of $\{\overline{a}_n(p)\}_{n\in\mathbb{Z}}$ are
\[\overline{E}^s(p,\pm\infty)=\{u\in\mathbb{R}^N:\,u\in E(\overline{g}(p),\pm\infty)\},\quad p\in P,\]
the corresponding stable bundles at $\pm\infty$ are given by the pullbacks

\begin{align}\label{pullbacks}
\overline{E}^s(+\infty)=\overline{g}^\ast(E^s(+\infty)),\quad \overline{E}^s(-\infty)=\overline{g}^\ast(E^s(-\infty)).
\end{align}
Moreover, as $\lambda_0$ is in the image of $g$, there is some $p_0\in P$ such that
\[x_{n+1}=\overline{a}_n(p_0)x_n,\quad n\in\mathbb{Z},\]
has only the trivial solution. Of course, $\overline{g}$ sends bifurcation points of \eqref{fP} to bifurcation points of \eqref{nonlin}, and as $\overline{g}(P)\cap B=\emptyset$, we see that the family \eqref{fP} has no bifurcation points. Consequently,
\[J(\overline{E}^s(+\infty))=J(\overline{E}^s(-\infty))\in J(P)\]
by Theorem \ref{thm:nonlin} showing that
\[w_i(\overline{E}^s(+\infty)))=w_i(\overline{E}^s(-\infty)))\in H^{i}(P;\mathbb{Z}_2).\]
By \eqref{pullbacks}, we obtain
\begin{align*}
0&=\langle w_i(\overline{E}^s(+\infty))-w_i(\overline{E}^s(-\infty)),\gamma\rangle=\langle \overline{g}^\ast w_i(E^s(+\infty))-\overline{g}^\ast w_i(E^s(-\infty)),\gamma\rangle\\
&=\langle \overline{g}^\ast(w_i(E^s(+\infty))- w_i(E^s(-\infty))),\gamma\rangle=\langle w_i(E^s(+\infty))- w_i(E^s(-\infty)),\overline{g}_\ast\gamma\rangle\\
&=\langle w_i(E^s(+\infty))- w_i(E^s(-\infty)),j_\ast g_\ast\gamma\rangle=\langle w_i(E^s(+\infty))- w_i(E^s(-\infty)),j_\ast\beta\rangle\\
&=\langle w_i(E^s(+\infty))- w_i(E^s(-\infty)),\alpha\rangle
\end{align*}
which is a contradiction to \eqref{alpha}.\\
Consequently, $\pi_\ast\alpha$ and so $\iota^\ast\eta\in\check{H}^{k-i}(B;\mathbb{Z}_2)$ is non-trivial. This shows that $\dim B\geq k-i$, and moreover $B$ is not contractible to a point as $k-i\geq 1$.


\section{An Example for $\Lambda=T^k$}
The aim of this section is to give an example of our theory, where the dimension of the dynamical systems is $N=2$ and the parameter space is the $k$-dimensional torus $T^k$ for some $k\in\mathbb{N}$.\\
In what follows we denote coordinates of the $k$-torus by $\lambda=(\lambda_1,\ldots,\lambda_k)$ and we will use without further mentioning the identification $\lambda_j=e^{i\Theta_j}$ for $\Theta_j\in(-\pi,\pi]$ and $j=1,\ldots,k$.\\
Let now $h_n:T^k\times\mathbb{R}^2\rightarrow\mathbb{R}^2$, $n\in\mathbb{Z}$, be a sequence of maps that satisfies (A1)--(A2) and
\begin{itemize}
	\item[(B1)] $(Dh_n)(\lambda,0)\rightarrow 0$ as $n\rightarrow\pm\infty$ uniformly in $\lambda\in T^k$.
\end{itemize}
Moreover, we set
\[S:=\{\lambda\in T^k:\,\Theta_1+\cdots+\Theta_k=(2l-1)\pi,\,l\in\mathbb{Z},\, -k\leq 2l-1\leq k\}\subset T^k,\]
which is a set of measure $0$, and we require
\begin{itemize}
	\item[(B2)] there is some $\lambda_0\notin S$ such that $\sup_{n\in\mathbb{Z}}\|(Dh_n)(\lambda_0,0)\|$ is sufficiently small.
\end{itemize}
Of course, this assumption may sound a bit vague, but it holds in any case if $(Dh_n)(\lambda_0,0)=0$ for all $n\in\mathbb{Z}$ at some $\lambda_0\notin S$, and we will derive a bound on $\sup_{n\in\mathbb{Z}}\|(Dh_n)(\lambda_0,0)\|$ below in the proof of Theorem \ref{thm:application}.\\
We consider the family of discrete dynamical systems
\begin{align}\label{application}
x_{n+1}=a_n(\lambda)x_n+h_n(\lambda,x_n),\quad n\in\mathbb{Z},
\end{align}
for $\lambda=(\lambda_1,\ldots,\lambda_k)\in T^k$ and
\begin{align*}
a_n(\lambda)=\begin{cases}
a(\lambda),&\quad n\geq 0\\
a(1,\ldots,1),&\quad n<0,
\end{cases}
\end{align*}
where
\begin{align*}
a(\lambda)=\begin{pmatrix}
\frac{1}{2}+\frac{3}{2}\sin^2\left(\frac{\Theta_1+\cdots+\Theta_k}{2}\right)&-\frac{3}{4}\sin(\Theta_1+\cdots+\Theta_k)\\
-\frac{3}{4}\sin(\Theta_1+\cdots+\Theta_k)&\frac{1}{2}+\frac{3}{2}\cos^2\left(\frac{\Theta_1+\cdots+\Theta_k}{2}\right)
\end{pmatrix}.
\end{align*}
Note that $0\in c_0(\mathbb{R}^2)$ is a solution of \eqref{application} for all $\lambda\in T^k$, as we require $h$ to satisfy (A1). In what follows we denote by $B\subset T^k$ the set of all bifurcation points of \eqref{application}. The aim of this section is to prove the following theorem.

\begin{theorem}\label{thm:application}
Let $a_n\colon T^k\rightarrow M(2,\mathbb{R})$ and $h_n\colon T^k\times\mathbb{R}^2\rightarrow\mathbb{R}^2$, $n\in\mathbb{Z}$, be sequences of maps as above.
\begin{itemize}
	\item If $k=1$, i.e. $T^1$ is the unit circle, then $B\neq\emptyset$.
	\item If $k\geq 2$, then the covering dimension of $B$ is at least $k-1$ and $B$ is not contractible.
\end{itemize}
\end{theorem}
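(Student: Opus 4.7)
The plan is to verify assumptions (A1)--(A5) for the family \eqref{application} and then apply Corollary \ref{cor-Jacobo} (if $k=1$) or Theorem \ref{thm:nonlinII} (if $k\geq 2$), where the required Stiefel--Whitney class computation is carried out explicitly. First I would diagonalise $a(\lambda)$: setting $\theta:=\Theta_1+\cdots+\Theta_k$, one computes $\tr a(\lambda)=5/2$ and $\det a(\lambda)=25/16-9/16=1$, so the eigenvalues are the constants $2$ and $1/2$ regardless of $\lambda$, and hence $a(\lambda)$ is hyperbolic everywhere on $T^k$. Solving explicitly, the stable eigenspace (for $\mu=1/2$) is spanned by
\[v(\lambda)=(\cos(\theta/2),\sin(\theta/2))\]
and the unstable one (for $\mu=2$) by $(\sin(\theta/2),-\cos(\theta/2))$. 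Since $a(1,\ldots,1)=\diag(1/2,2)$, the asymptotic stable subspace at $-\infty$ is constantly $\spann\{(1,0)\}$. With (B1), the linearisations $(Df_n)(\lambda,0)=a_n(\lambda)+(Dh_n)(\lambda,0)$ converge uniformly to $a(\lambda)$ as $n\to+\infty$ and to $a(1,\ldots,1)$ as $n\to-\infty$, so (A1)--(A3) hold; (A4) is immediate since both asymptotic matrices have exactly one eigenvalue inside the unit disc.

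For (A5), let $\lambda_0\notin S$ be the parameter provided by (B2) and denote by $L^{(0)}_{\lambda_0}$ the operator on $c_0(\mathbb{R}^2)$ associated with $x_{n+1}=a_n(\lambda_0)x_n$ (without the $(Dh_n)(\lambda_0,0)$ correction). Its homoclinic solutions correspond to initial data $x_0\in E^s(a(\lambda_0))\cap E^u(a(1,\ldots,1))=\spann\{v(\lambda_0)\}\cap\spann\{(0,1)\}$, which is trivial precisely when $\cos(\theta_0/2)\neq 0$, i.e.\ when $\lambda_0\notin S$. Hence $\ker L^{(0)}_{\lambda_0}=0$, and since by Theorem \ref{thm-lin} and Remark \ref{rem:dim} $L^{(0)}_{\lambda_0}$ is Fredholm of index $0$, it is invertible. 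The operator $L_{\lambda_0}$ associated with the true linearisation \eqref{equlin} differs from $L^{(0)}_{\lambda_0}$ by the bounded multiplication operator with norm at most $\sup_n\|(Dh_n)(\lambda_0,0)\|$. Under (B2), this sup is smaller than $\|(L^{(0)}_{\lambda_0})^{-1}\|^{-1}$, so a Neumann series argument yields invertibility of $L_{\lambda_0}$; this establishes (A5) and makes precise the smallness condition in (B2).

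Next I would compute the first Stiefel--Whitney classes of the asymptotic stable bundles. The bundle $E^s(-\infty)$ is constant and hence trivial, so $w_1(E^s(-\infty))=0$. The bundle $E^s(+\infty)\to T^k$ is the real line bundle with fibre $\spann\{v(\lambda)\}$ at $\lambda$. If $\gamma_j\colon S^1\to T^k$ is the $j$-th standard generator of $H_1(T^k;\mathbb{Z}_2)\cong\mathbb{Z}_2^k$, along which only $\Theta_j$ varies over $(-\pi,\pi]$ while the other coordinates stay fixed, then $\theta$ increases by $2\pi$ and $v$ flips its sign; consequently, $\gamma_j^\ast E^s(+\infty)$ is the M\"obius line bundle over $S^1$. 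Hence $\langle w_1(E^s(+\infty)),[\gamma_j]\rangle=1$ for every $j$, so in the dual basis $\alpha_1,\ldots,\alpha_k$ of $H^1(T^k;\mathbb{Z}_2)$ we obtain
\[w_1(E^s(+\infty))=\alpha_1+\cdots+\alpha_k\neq 0=w_1(E^s(-\infty)).\]

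Finally, for $k=1$ the inequality $w_1(E^s(+\infty))\neq w_1(E^s(-\infty))$ in $H^1(S^1;\mathbb{Z}_2)$ is exactly the hypothesis of Corollary \ref{cor-Jacobo}, yielding $B\neq\emptyset$. For $k\geq 2$, $T^k$ is a compact connected topological manifold of dimension $k\geq 2$, and Theorem \ref{thm:nonlinII} applied with $i=1$ gives $\dim B\geq k-1$ and the non-contractibility of $B$. The two places that require genuine work are the Neumann series argument for (A5) (which also pins down the bound to be imposed in (B2)) and the line-bundle computation for $E^s(+\infty)$; everything else follows routinely from the explicit form of $a(\lambda)$.
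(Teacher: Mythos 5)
Your proposal is correct and follows essentially the same route as the paper: verify (A1)--(A5) using the constant eigenvalues $1/2,2$ of $a(\lambda)$, establish (A5) via invertibility of the unperturbed operator for $\lambda_0\notin S$ plus a Neumann series argument quantifying (B2), identify $E^s(+\infty)$ restricted to a coordinate circle with the M\"obius bundle to get $w_1(E^s(+\infty))\neq 0=w_1(E^s(-\infty))$, and conclude by Corollary \ref{cor-Jacobo} for $k=1$ and Theorem \ref{thm:nonlinII} with $i=1$ for $k\geq 2$. (Only cosmetic quibble: the intermediate expression $\det a(\lambda)=25/16-9/16$ is not an identity in $\lambda$, though the conclusion $\det a(\lambda)=1$ and the eigenvalue computation are correct.)
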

\begin{proof}
We set
\[f_n:T^k\times\mathbb{R}^2\rightarrow\mathbb{R}^2,\quad f_n(\lambda,u)=a_n(\lambda)u+h_n(\lambda,u),\quad n\in\mathbb{Z},\]
and our first aim is to show that (A1)-(A5) are satisfied, which is clear for (A1)-(A2). Now
\[(Df_n)(\lambda,0)=a_n(\lambda)+(Dh_n)(\lambda,0),\]
and we see that by (B1)
\[\lim_{n\rightarrow-\infty}(Df_n)(\lambda,0)=a(1,\ldots,1),\qquad\lim_{n\rightarrow+\infty}(Df_n)(\lambda,0)=a(\lambda)\]
uniformly in $\lambda$. As the eigenvalues of $a(\lambda)$ are $\frac{1}{2}$ and $2$ for all $\lambda\in T^k$, it follows that (A3) and (A4) hold.\\
It remains to show (A5), for which we consider the parameter value $\lambda_0$ from (B2). We introduce the operator
\[\widetilde{L}:c_0(\mathbb{R}^2)\rightarrow c_0(\mathbb{R}^2),\quad (\widetilde{L}x)_n=x_{n+1}-a_n(\lambda_0)x_n,\]
and we note that it is Fredholm of index $0$ according to Theorem \ref{thm-lin}. Hence $\widetilde{L}$ is invertible if and only if it has a trivial kernel. If now $x\in\ker\widetilde{L}$, then
\begin{align*}
x_n=\begin{cases}
a(\lambda_0)^nx_0,&n\geq 0\\
a(1,\ldots,1)^nx_0, &n<0
\end{cases},
\end{align*}
which yields a non-trivial element in $c_0(\mathbb{R}^2)$ if and only if $x_0\in E^s(a(\lambda_0))\cap E^u(a(1,\ldots,1))$. Clearly, $E^u(a(1,\ldots,1))=\{0\}\oplus\mathbb{R}$, and moreover by a straightforward computation we obtain
\[E^s(a(\lambda_0))=\{u\in \R^2\mid u=t(\cos((\Theta_1+\cdots +\Theta_k)/2),\sin((\Theta_1+\cdots +\Theta_k)/2))\}.\]
Hence $\widetilde{L}$ is invertible as $\lambda_0\notin S$. By the Neumann series, every sufficiently small perturbation of an invertible operator is still invertible. Hence there is a constant $C(\lambda_0)$ such that
\[x_{n+1}=a_n(\lambda_0)x_n+(Dh_n)(\lambda_0,0)x_n,\quad n\in\mathbb{Z}\]
has only the trivial solution in $c_0(\mathbb{R}^2)$ if $\sup_{n\in\mathbb{Z}}\|(Dh_n)(\lambda_0,0)\|<C(\lambda_0)$.\\
Consequently, the family of discrete dynamical systems \eqref{application} satisfies all assumptions (A1)--(A5). Let us now consider the stable bundles $E^s(+\infty)$ and $E^s(-\infty)$. Clearly,
\begin{equation*}
E^s(-\infty)=\{(\lambda,u)\in T^k\times\R^2 \mid \, u=(t,0),\, t\in\R\},
\end{equation*}
and moreover it is readily seen that
\begin{equation*}
E^s(+\infty)=\{(\lambda,u)\in T^k\times\R^2 \mid u=t(\cos((\Theta_1+\cdots +\Theta_k)/2),\sin((\Theta_1+\cdots +\Theta_k)/2)),\, t\in\R\}.
\end{equation*}
As $E^s(-\infty)$ is a trivial bundle, we get that $w_1(E^s(-\infty))=0\in H^1(T^k;\mathbb{Z}_2)$. Let us now consider $E^s(+\infty)$. We have a canonical embedding $\iota:T^1\hookrightarrow T^k$ given by $\lambda_1\mapsto(\lambda_1,1,\ldots,1)$. We pullback $E^s(+\infty)$ to $T^1$ by $\iota$ and obtain
\begin{equation*}
\iota^\ast(E^s(+\infty))=\left\{(\lambda_1,u)\in T^1\times\mathbb{R}^2\mid u=t\left(\cos\left(\Theta_1/2\right),\sin\left(\Theta_1/2\right)\right)\right\},
\end{equation*}
where as before $\lambda_1=\exp(i\Theta_1)$ for $\Theta_1\in(-\pi,\pi]$. Now $\iota^\ast(E^s(+\infty))$ is the Möbius bundle over $S^1$, which is non-orientable. As a vector bundle is orientable if and only if its first Stiefel-Whitney class vanishes (cf. \cite[Thm. II.1.2]{Lawson}), it follows that $w_1(\iota^\ast(E^s(+\infty)))\neq 0\in H^1(S^1;\mathbb{Z}_2)$. We obtain from the naturality of Stiefel-Whitney classes that
\[\iota^\ast(w_1(E^s(+\infty)))=w_1(\iota^\ast(E^s(+\infty)))\neq 0\]
which implies that $w_1(E^s(+\infty))\neq 0\in H^1(T^k;\mathbb{Z}_2)$. Now the assertion of Theorem \ref{thm:application} follows from Corollary \ref{cor-Jacobo} for $k=1$ and from Theorem \ref{thm:nonlinII} for $k\geq 2$.
\end{proof}
\noindent
Let us point out that, using the convention that the covering dimension of the empty set is $-1$, we could write in Theorem \ref{thm:application} that $\dim B\geq k-1$ for all $T^k$. However, the assertion that $B$ is not contractible is in general wrong for $k=1$. Indeed, if we assume in the example above that $h_n\equiv0$ for all $n\in\mathbb{Z}$, then the family \eqref{application} is
\[x_{n+1}=a_n(\lambda)x_n,\quad n\in\mathbb{Z},\]
and clearly the bifurcation points are those $\lambda\in T^1$ for which these linear equations have a non-trivial solution. Arguing as in the proof of Theorem \ref{thm:application}, we see that the space of solutions is isomorphic to $E^s(a(\lambda))\cap E^u(a(1))$. As $E^u(a(1))=\{0\}\oplus\mathbb{R}$ and
\[E^s(a(\lambda))=\left\{t\left(\cos\left(\Theta/2\right),\sin\left(\Theta/2\right)\right)\mid t\in\mathbb{R}\right\},\]
we see that this intersection is non-trivial only if $\lambda=-1$, which shows that $B=\{-1\}\subset T^1$.

\section{An Example concerning Assumption (A5)}
The aim of this section is to show that Theorem \ref{thm:nonlin} is wrong if we do not require Assumption (A5).\\
Let us first introduce some notation and recapitulate basic facts from functional analysis. The dual space $c_0(\mathbb{R}^N)'$ of $c_0(\mathbb{R}^N)$ is canonically isomorphic to $\ell^1(\mathbb{R}^N)$ and the identification is given by assigning to a linear functional $f:c_0(\mathbb{R}^N)\rightarrow\mathbb{R}$ the unique $u\in\ell^1(\mathbb{R}^N)$ such that

\[f(v)=\sum_{n\in\mathbb{Z}}{\langle v_n,u_n\rangle}.\]
We now assume that $a_n:\Lambda\rightarrow M(N,\mathbb{R})$, $n\in\mathbb{Z}$, is a sequence of continuous maps such that

\begin{equation*}
\sup_{n\in\Z}\|a_n(\lambda)\|<\infty \text{ for all }\lambda\in\Lambda.
\end{equation*}
We obtain a family of bounded operators $L\colon \Lambda\times c_0(\mathbb{R}^N)\rightarrow c_0(\mathbb{R}^N)$ given by

\begin{align}\label{Linier}
(L_\lambda x)_n=x_{n+1}-a_n(\lambda)x_n,
\end{align}
and the dual operator $L'\colon \Lambda\times \ell^1(\mathbb{R}^N)\rightarrow \ell^1(\mathbb{R}^N)$ to $L$ is given by

\begin{align}
(L_\lambda' x)_n=x_{n}-a_{n+1}^t(\lambda)x_{n+1},\quad n\in\mathbb{Z},
\end{align}
where we denote by $a^t$ the transpose of a matrix $a\in M(N,\mathbb{R})$. It is well known that $L_\lambda$ is Fredholm if and only if $L'_\lambda$ is Fredholm, and

\begin{align}\label{ortogonal}
\begin{split}
\ker (L'_{\lambda})&= \im(L_{\lambda})^{\perp}:=\{f\in c_0(\mathbb{R}^N)':\,f(z)=0\,\text{for all}\, z\in\im(L_\lambda) \}\\
&=\left\{z\in\ell^1(\mathbb{R}^N):\,\sum_{n\in\mathbb{Z}}{\langle z_n,v_n\rangle=0}\,\,\text{for all}\,\, v\in\im( L_\lambda)\right\}.
\end{split}
\end{align}
Let us now take once again as parameter space $\Lambda=T^k$ as in Section $6$, and let us consider the family of discrete dynamical systems
\begin{align}\label{DDS-with-h}
x_{n+1}=a_n(\lambda)x_n+h_n(\lambda,x_n)\quad \text{ for } \lambda=(\lambda_1,\ldots,\lambda_k)\in T^k,\quad n\in\Z,
\end{align}
where
\begin{align*}
h_n(\lambda,x)&=(0,0,0,\|x\|^2)\quad\,\text{ for }\, x\in\R^4,\, n\in\Z,
\end{align*}

\begin{align*}
a_n(\lambda)=\begin{cases}
a(\lambda),\quad &n\geq 0\\
a_-,\quad &n<0
\end{cases}
\end{align*}
and
\begin{align*}
a(\lambda)&=\begin{pmatrix}
\frac{1}{2}+\frac{3}{2}\sin^2\left(\frac{\Theta_1+\cdots+\Theta_k}{2}\right)&-\frac{3}{4}\sin(\Theta_1+\cdots+\Theta_k) & 0 & 0\\
-\frac{3}{4}\sin(\Theta_1+\cdots+\Theta_k)&\frac{1}{2}+\frac{3}{2}\cos^2\left(\frac{\Theta_1+\cdots+\Theta_k}{2}\right) & 0 & 0\\
0 & 0 & \frac{1}{2} & 0\\
0 & 0 & 0 & 2
\end{pmatrix},
\\
a_-&=\begin{pmatrix}
\frac{1}{2}&0 & 0 & 0\\
0 & 2& 0 & 0\\
0 & 0 & 2 & 0\\
0 & 0 & 0 & \frac{1}{2}
\end{pmatrix}.
\end{align*}
Note that for any $\lambda\in T^k$ the linear system $x_{n+1}=a_n(\lambda)x_n$ admits the non-trivial solution
\begin{equation*}
x=(x_n)=((0,0,1/2^{|n|},0))\in c_0(\R^4).
\end{equation*}
Our aim is now to show that for any $\lambda\in T^k$ the discrete dynamical system $x_{n+1}=a_n(\lambda)x_n+h_n(\lambda,x_n)$ does not have a nontrivial
solution, which implies that there are no bifurcation points.
\newline\indent For this purpose, let us consider the operators $L_{\lambda}\colon c_0(\R^4)\to c_0(\R^4)$ given by

\begin{align*}
(L_{\lambda}x)_n&=x_{n+1}-\;a_n(\lambda)x_n \quad \text{ for } x\in c_0(\R^4),\;\lambda\in T^k,\\
\end{align*}
and let us note that the corresponding dual operator $L_{\lambda}'\colon \ell^1(\R^4)\to \ell^1(\R^4)$ is given by

\begin{align*}
(L_{\lambda}'y)_n&=y_{n}-a_{n+1}^t(\lambda)y_{n+1} \quad \text{ for } y\in \ell^1(\R^4),\;\lambda\in T^k.
\end{align*}
It is readily seen that

\begin{equation}
y=(y_n)=((0,0,0,1/2^{|n|}))\in \ell^1(\R^4)
\end{equation}
is in the kernel of $L'_\lambda$ for all $\lambda\in T^k$. Let now $u=(u_n)$ be a solution of
\eqref{DDS-with-h} and let us recall that our aim is to show that $u=0$. As $v=(h_n(\lambda,u_n))\in \im L_{\lambda}$ and $y\in \ker L'_{\lambda}$, we see from \eqref{ortogonal} that

\begin{align*}
\langle y,v\rangle=\sum_{n\in \Z}\langle y_n,v_n\rangle=0.
\end{align*}
Consequently,
\begin{align*}
0=\sum_{n\in \Z}\langle y_n,v_n\rangle=\sum_{n\in \Z}\langle y_n,h_n(\lambda,u_n)\rangle=\sum_{n\in \Z}\frac{1}{2^{|n|}}\cdot \|u_n\|^2,
\end{align*}
and so $\|u_n\|=0$ for all $n\in\Z$. Hence we have shown that \eqref{DDS-with-h} has no bifurcation point.\\
However, it is readily seen that
\begin{equation*}
E^s(-\infty)=\{(\lambda,u)\in T^k\times\R^4\mid \, u=(t,0,0,s),\, t,s\in\R\}
\end{equation*}
and
\begin{equation*}
E^s(+\infty)=\{(\lambda,u)\in T^k\times\R^4\mid u=(t\cos((\Theta_1+\cdots +\Theta_k)/2),t\sin((\Theta_1+\cdots +\Theta_k)/2),s,0), t,s\in\mathbb{R}\}.
\end{equation*}
It can be shown as in Section $6$ that
\begin{equation*}
w(E^s(+\infty))\neq w(E^s(-\infty))\in H^\ast(T^k;\Z_2).
\end{equation*}
Hence we have found an example of a discrete dynamical system satisfying Assumptions (A1)--(A4) except of (A5) which does not satisfy the conclusion of Theorem \ref{thm:nonlin}.


\section{Concluding remarks}
There are several interesting points of further study which are not covered in this paper and of which we just want to mention the following ones:

\begin{itemize}
\item Theorem \ref{thm-lin} actually holds true under weaker assumptions (and hence also the Theorems \ref{thm:nonlin} and \ref{thm:nonlinII}). Namely, it suffices to assume that the linearised systems

\[x_{n+1}=a_n(\lambda)x_n\]
admit an exponential dichotomy on $\Z^{\pm}$ $($with corresponding projectors $P^{\pm}\colon \Lambda\times\Z^{\pm}\to M(N,\R))$, where $\Z^+:=\Z\cap [0,\infty)$ and $\Z^-:=\Z\cap (-\infty,0]$. Then the index bundle of $L$ given by \eqref{Linier} is of the form:	
\begin{equation*}
\ind(L)=[\im P^+_0]-[\im P^-_0]\in KO(\Lambda),
\end{equation*}
where $\im P^{\pm}_{0}:=\{(\lambda,x)\in \Lambda\times \R^N\mid x\in \im P^{\pm}(\lambda,0)\}$ denotes the vector bundles induced by the projectors $P^{\pm}$.  The proof involves some additional techniques which are not contained and discussed here and will be treated in a forthcoming paper.

The concept of an exponential dichotomy (ED for short) introduced by Perron \cite{Perron} plays a central role in the stability
theory of differential equations, discrete dynamical systems, delay evolution equations and many others fields of mathematics.
The concept was taken forward by Coppel \cite{Coppel}, Palmer \cite{Pa84} and others. In particular, a significant contribution in this direction, in infinite dimensional spaces, was made by Henry in \cite{Henry}, where he carried over this concept from the topic of differential equations to the case of discrete dynamical systems.
Note that an exponential dichotomy extends the idea of hyperbolicity for autonomous discrete dynamical systems
to explicitly non autonomous discrete dynamical systems. More precisely, an ED is a hyperbolic splitting of the extended
state space for linear non autonomous difference equations into two
vector bundles. The first one, called stable vector bundle, consists of all solutions decaying exponentially in forward time, while the complementary unstable
vector  bundle  consists  of  all  solutions  which  exist  and  decay  in  backward  time. Moreover, an ED allows to provide
a necessary condition for bifurcations of entire solutions (see for example \cite{Christian10}).
\item Our methods can easily be adapted in order to study bifurcation of homoclinics on manifolds. Following \cite{Ab-Ma2}, to each finite dimensional manifold $M$ and  discrete dynamical system $f\colon \Z\times M\to M$ having  $x\in M$  as a stationary trajectory, we can associate the Banach manifold $c_x(M)$ which is a natural place for the study of  trajectories of the dynamical system $f$ homoclinic to $x$.
\item In this paper we have not studied global bifurcation of homoclinic trajectories. It should turn out that the methods developed in this article can be applied to the study of the existence of
connected branches  of solutions  and their  behavior, and the  existence of  large  homoclinic trajectories  using bifurcation from infinity. Some partial results were obtained in \cite{JacoboRobertI}. However, this subject requires more thorough investigations.
\item Another interesting topic which will be considered in a forthcoming paper concerns bifurcation of homoclinic trajectories
of non-autonomous Hamiltonian vector fields parametrised by a compact and connected space. This topic has been considered, e.g. in \cite{Jacobo, Secchi}, but it also shall be treated by our methods.
\end{itemize}

\section*{Acknowledgements}
The first author is supported by the NCN Grant 2013/09/B/ST1/01963.

\thebibliography{9999999}

\bibitem[AM06]{Ab-Ma2} A. Abbondandolo, P. Majer,
\textbf{On the global stable manifold}, Studia Math. \textbf{177}, 2006, 113--131

\bibitem[Ar66]{Arlt} D. Arlt, \textbf{Zusammenziehbarkeit der allgemeinen linearen Gruppe des Raumes $c_0$ der Nullfolgen}, Invent. Math. \textbf{1}, 1966, 36--44

\bibitem[At61]{AtiyahThom} M.F. Atiyah, \textbf{Thom complexes}, Proc. London Math. Soc. (3) \textbf{11}, 1961, 291--310

\bibitem[AS71]{AtiyahSinger} M.F. Atiyah, I.M. Singer, \textbf{The index of elliptic operators. IV}, Ann. of Math. (2) \textbf{93}, 1971, 119--138

\bibitem[At89]{KTheoryAtiyah} M.F. Atiyah, \textbf{K-Theory}, Addison-Wesley, 1989

\bibitem[Ba91]{BartschII} T. Bartsch, \textbf{The global structure of the zero set of a family of semilinear Fredholm
 maps}, Nonlinear Anal. \textbf{17}, 1991, 313--331


\bibitem[Br93]{Bredon} G.E. Bredon, \textbf{Topology and Geometry}, Graduate Texts in Mathematics \textbf{139}, Springer, 1993


\bibitem[Co78]{Coppel} W. A. Coppel, \textbf{Dichotomies in Stability Theory}, Lecture Notes in Math., vol.
629, Springer-Verlag, New York, 1978

\bibitem[CJ98]{Crabb} M. Crabb, I. James, \textbf{Fibrewise homotopy theory}, Springer Monographs in Mathematics. Springer-Verlag London, Ltd., London,  1998


\bibitem[Do55]{Dold} A. Dold, \textbf{Über fasernweise Homotopieäquivalenz von Faserräumen}, Math. Z. \textbf{62}, 1955, 111--136

\bibitem[FP91]{FiPejsachowiczII} P.M. Fitzpatrick, J. Pejsachowicz, \textbf{Nonorientability of the Index Bundle and Several-Parameter Bifurcation}, J. Funct. Anal. \textbf{98}, 1991, 42-58

\bibitem[He81]{Henry} D. Henry, \textbf{Geometric Theory of Semilinear Parabolic Equations}, Springer-Verlag, New York, 1981

\bibitem[HW48]{Hurewicz} W. Hurewicz, H. Wallmann, \textbf{Dimension Theory}, Princeton Mathematical Series \textbf{4}, Princeton University Press, 1948

\bibitem[Hu09]{Huls} T. H\"{u}ls, \textbf{Homoclinic trajectories of non-autonomous maps}, J. Difference Equ. Appl., \textbf{17}, no. 1, 2011, 9--31

\bibitem[J65]{Jaenich} K. Jänich, \textbf{Vektorraumbündel und der Raum der Fredholmoperatoren}, Math. Ann. \textbf{161}, 1965, 129--142

\bibitem[La95]{Lang} S. Lang, \textbf{Differential and Riemannian manifolds}, Third edition, Graduate Texts in Mathematics \textbf{160}, Springer-Verlag, New York,  1995

\bibitem[LM89]{Lawson} H.B. Lawson, M.-L. Michelsohn, \textbf{Spin geometry}, Princeton Mathematical Series \textbf{38}, Princeton University Press, Princeton, NJ,  1989

\bibitem[Ma99]{May} J.P. May, \textbf{A Concise Course in Algebraic Topology}, Chicago University Press, 2nd edition, 1999

\bibitem[MS74]{MiSta} J.W. Milnor, J.D. Stasheff, \textbf{Characteristic Classes}, Princeton University Press, 1974

\bibitem[Pa84]{Pa84} K. J. Palmer, \textbf{Exponential dichotomies and transversal homoclinic points}, Journal of Differential Equations \textbf{55},
1984, 225--256

\bibitem[Pa88]{Pal88} K. J. Palmer, \textbf{Exponential dichotomies, the shadowing lemma and transversal homoclinic points},
Dynamics Reported \textbf{1}, 1988, 265--306

\bibitem[Pa08]{Park} E. Park, \textbf{Complex topological K-theory}, Cambridge Studies in Advanced Mathematics \textbf{111}, Cambridge University Press, Cambridge,  2008

\bibitem[Pe88]{JacoboK} J. Pejsachowicz, \textbf{K-theoretic methods in bifurcation theory}, Fixed point theory and its applications (Berkeley, CA, 1986), 193--206, Contemp. Math., 72, Amer. Math. Soc., Providence, RI,  1988

\bibitem[Pe01]{JacoboTMNA0} J. Pejsachowicz, \textbf{Index bundle, Leray-Schauder reduction and bifurcation of solutions of nonlinear elliptic boundary value problems}, Topol. Methods Nonlinear Anal. \textbf{18}, 2001, 243--267

\bibitem[Pe08a]{JacoboAMS} J. Pejsachowicz, \textbf{Bifurcation of homoclinics}, Proc. Amer. Math. Soc., \textbf{136}, no. 1,  2008, 111--118

\bibitem[Pe08b]{Jacobo} J. Pejsachowicz, \textbf{Bifurcation of homoclinics of Hamiltonian systems}, Proc. Amer. Math. Soc., \textbf{136}, no. 6,  2008,

\bibitem[Pe11a]{JacoboTMNAI} J. Pejsachowicz, \textbf{Bifurcation of Fredholm maps I. The index bundle and bifurcation}, Topol. Methods Nonlinear Anal. \textbf{38},  2011, 115--168

\bibitem[Pe11b]{JacoboTMNAII} J. Pejsachowicz, \textbf{Bifurcation of Fredholm maps II. The dimension of the set of
 bifurcation points}, Topol. Methods Nonlinear Anal. \textbf{38},  2011, 291--305

\bibitem[PS12]{JacoboRobertI} J. Pejsachowicz, R. Skiba,  \textbf{Global bifurcation of homoclinic trajectories of discrete dynamical systems}, Central European Journal of Mathematics, \textbf{10(6)}, 2012, 2088--2109

\bibitem[PS13]{JacoboRobertII} J. Pejsachowicz, R. Skiba, \textbf{Topology and homoclinic trajectories of discrete dynamical systems}, Discrete and Continuous Dynamical Systems, Series S, \textbf{6(4)}, 2013, 1077--1094

\bibitem[Pe15]{JacoboJFPTA} J. Pejsachowicz, \textbf{The index bundle and bifurcation from infinity of solutions of nonlinear elliptic boundary value problems}, J. Fixed Point Theory Appl.  \textbf{17},  2015, 43--64

\bibitem[Per]{Perron} O. Perron, \textbf{Die Stabilit\"{a}tsfrage bei Differentialgleichungen}, Math. Z. \textbf{32}, 1930, 703--728

\bibitem[Po10]{Christian10}  C. Pötzsche, \textbf{Nonautonomous bifurcation of bounded solutions I: A Lyapunov-Schmidt approach},
Discrete Contin. Dyn. Syst., Ser. B \textbf{14}, No. 2, 2010, 739--776.

\bibitem[Po11a]{Poetzsche} C. Pötzsche, \textbf{Nonautonomous continuation of bounded solutions}, Commun. Pure Appl. Anal. \textbf{10}, 2011, 937--961

\bibitem[Po11b]{Poetzscheb} C. Pötzsche, \textbf{Bifurcations in Nonautonomous Dynamical Systems: Results and tools in discrete time},
Proceedings of the International Workshop Future Directions in Difference Equations, 2011, 163--212

\bibitem[SS03]{Secchi} S. Secchi, C. A. Stuart, \textbf{Global Bifurcation of homoclinic solutions of Hamiltonian systems}, Discrete Contin.
Dyn. Syst. \textbf{9}, no. 6, 2003, 1493--1518

\bibitem[SW15]{MaciejIch} M. Starostka, N. Waterstraat, \textbf{A remark on singular sets of vector bundle morphisms},  Eur. J. Math. \textbf{1}, 2015, 154--159

\bibitem[Wa11]{indbundleIch} N. Waterstraat, \textbf{The index bundle for Fredholm morphisms}, Rend. Sem. Mat. Univ. Politec. Torino \textbf{69}, 2011, 299--315

\bibitem[Wa16b]{NilsBif} N. Waterstraat, \textbf{A Remark on Bifurcation of Fredholm Maps}, accepted for publication in Adv. Nonlinear Anal., https://doi.org/10.1515/anona-2016-0067, arXiv:1602.02320 [math.FA]

\bibitem[ZKKP75]{Banach Bundles} M.G. Zaidenberg, S.G. Krein, P.A. Kuchment, A.A. Pankov, \textbf{Banach Bundles and Linear Operators}, Russian Math. Surveys \textbf{30:5}, 1975, 115-175

\newpage

\vspace{1cm}
Robert Skiba\\
Faculty of Mathematics and Computer Science\\
Nicolaus Copernicus University\\
Chopina 12/18\\
87-100 Torun\\
Poland\\
E-mail: robo@mat.umk.pl

\vspace{1cm}
Nils Waterstraat\\
School of Mathematics,\\
Statistics \& Actuarial Science\\
University of Kent\\
Canterbury\\
Kent CT2 7NF\\
UNITED KINGDOM\\
E-mail: n.waterstraat@kent.ac.uk

\end{document}